\documentclass[12pt,reqno]{article}
\usepackage{amsmath,amsthm,amsfonts,amssymb,amscd}
\usepackage[pdftex]{graphicx}
\usepackage{psfrag}
\usepackage{textcomp}
\usepackage[pctex32]{color}
\usepackage[pagewise]{lineno}

\theoremstyle{plain}
\newtheorem{thm}{Theorem}[section]
\newtheorem{rk}[thm]{Remark}
\newtheorem{conj}[thm]{Conjecture}
\newtheorem{prop}[thm]{Proposition}
\newtheorem{clly}[thm]{Corollary}
\newtheorem{lemma}[thm]{Lemma}
\newtheorem{defi}[thm]{Definition}

\newtheorem{maintheorem}{Theorem}

\newcommand{\re}{{\Bbb R}}
\newcommand{\nat}{{\Bbb N}}

\setlength\oddsidemargin{1.1cm}
\setlength\topmargin{0cm}
\setlength\textheight{21cm}
\setlength\textwidth{14.5cm}
\setlength\footskip{2cm}

\sloppy
\hyphenpenalty=100000

\clubpenalty=10000
\widowpenalty=10000
\displaywidowpenalty=10000

\title{On the intersection of homoclinic classes in intransitive sectional-Anosov flows}
\author{H. M. S\'anchez
        \thanks{
{\em Key words and phrases}:
Sectional-Anosov flow, Sectional-hyperbolic set, Homoclinic classes, Venice mask.
This work is partially supported by CAPES, Brazil.}}
\date{}

\begin{document}
\maketitle

\begin{abstract}
We show that if $X$ is a {\em Venice mask}\index{Venice mask}
(i.e. nontransitive sectional-Anosov flow with dense periodic orbits,
\cite{bmp}, \cite{mp}, \cite{mp2},\cite{ls})
supported on a compact $3$-manifold, then the omega-limit set of  
every non-recurrent point in the unstable manifold of some singularity is a closed orbit. In addition, we prove that 
the intersection of two different homoclinic classes in the maximal invariant set of a sectional-Anosov flow can be 
decomposed as the disjoint union of, singular points, a non-singular hyperbolic set, and regular points whose 
{\em alpha-limit set} and {\em omega-limit set} is formed by singular points or hyperbolic sets.
\end{abstract}


\section{Introduction}

The dynamical systems theory is interested to describes the behavior as time goes
to infinity for the majority of orbits in a determinated system. An important tool for hyperbolic sets \index{Hyperbolic
set} is the known {\em connecting lemma} \index{Connecting lemma} \cite{hay}, \cite{ap}, \cite{bdv}. Specifically,
the lemma says that if $X$ is an Anosov flow on a compact manifold $M$ and $p, q \in M$ satisfy that for all 
$\varepsilon> 0$ there is a trajectory from a point $\varepsilon$-close to $p$ to a point $\varepsilon$-close to $q$, 
then there is a point $x\in M $ such that $\alpha_X(x)=\alpha_X(p)$ and $\omega_X(x)=\omega_X(q)$.

In \cite{bm2} was proved a similar result for sectional-Anosov flows, which is known as {\em sectional-connecting lemma}.
\index{Sectional-connecting lemma}
Recall, the sectional hyperbolic sets and 
sectional Anosov flows were introduced in \cite{mo} and \cite{mem} respectively 
as a generalization of the hyperbolic sets and Anosov flows to include important examples such as 
the saddle-type hyperbolic attracting sets, the geometric and multidimensional 
Lorenz attractors \cite{abs}, \cite{bpv}, \cite{gw} and certain robustly transitive sets.
A fundamental hypothesis in the sectional-hyperbolic case consists in the alpha-limit set of $p\in M(X)$ to be 
non-singular. As the unstable manifold of every singularity $\sigma$ \index{Singularity} of a 
sectional-Anosov $X$ is 
contained in the maximal invariant set $M(X)$, would be interesting to know what is the omega-limit set of a point in
$W^u_X(\sigma)$. In fact, it can be seen as a extension of the {\em sectional-connecting lemma}.

On the other hand, the class of Venice masks (i.e. intransitive sectional-Anosov flows with dense periodic orbits)
has a particular interest since its existence shows that the spectral decomposition theorem \cite{sma}
is not valid in the sectional-hyperbolic case. Its study has been collected by different authors during the last years.
The examples exhibited in \cite{bmp}, \cite{ls}, \cite{mp} are characterized because the maximal invariant set can be
decomposed as the disjoint finite union of homoclinic classes. In addition, the intersection between two different
homoclinic classes is contained in the closure of the union of the unstable manifold of the singularities.
Specifically, this intersection can be decomposed  as the disjoint union of, a singularity
$\sigma$, a closed orbit $C$, and regular points such that its {\em alpha-limit set} \index{Alpha-limit set}
is $\sigma$ and the {\em omega-limit set} \index{Omega-limit set} is $C$.
Particularly, was proved in \cite{mp}, \cite{mp2} that every Venice mask with a unique singularity 
has these properties. 

In search of properties which allow to characterized the dynamic of Venice masks, \index{Venice mask} will be studied the 
behavior of homoclinic classes \index{Homoclinic classes} and its relation with the unstable manifolds of the singularities.

Let us state our results in a more precise way.\\

Consider a Riemannian compact manifold $M$ of dimension $n$ (a {\em compact $n$-manifold} for short).
$M$ is endowed with a Riemannian metric $\langle\cdot,\cdot\rangle$ and an
induced norm $\lVert\cdot\rVert$. 
		We denote by $\partial M$ the boundary of $M$.
		Let ${\cal X}^1(M)$ be the space
		of $C^1$ vector fields in $M$ endowed with the
		$C^1$ topology.
		Fix $X\in {\cal X}^1(M)$, inwardly
		transverse to the boundary $\partial M$ and denotes
		by $X_t$ the flow of $X$, $t\in I\!\! R$.
		
The {\em $\omega$-limit set} of $p\in M$ is the set
$\omega_X(p)$ formed by those $q\in M$ such that $q=\lim_{n \rightarrow \infty}X_{t_n}(p)$ for some
sequence $t_n\to\infty$. The {\em $\alpha$-limit set} of $p\in M$ is the set
$\alpha_X(p)$ formed by those $q\in M$ such that $q=\lim_{n \rightarrow \infty}X_{t_n}(p)$ for some
sequence $t_n\to -\infty$. The {\em non-wandering set} of $X$ is
the set $\Omega(X)$ of points $p\in M$ such that
for every neighborhood $U$ of $p$ and every $T>0$ there
is $t>T$ such that $X_t(U)\cap U\neq\emptyset$.
Given $\Lambda \in M$ compact, we say that $\Lambda$ is {\em invariant} 
if $X_t(\Lambda)=\Lambda$ for all $t\in I\!\! R$.
We also say that $\Lambda$ is {\em transitive} if
$\Lambda=\omega_X(p)$ for some $p\in \Lambda$; {\em singular} if it
contains a singularity and
{\em attracting}
if $\Lambda=\cap_{t>0}X_t(U)$
for some compact neighborhood $U$ of it.
This neighborhood is often called
{\em isolating block}.
It is well known that the isolating block $U$ can be chosen to be
positively invariant, i.e., $X_t(U)\subset U$ for all
$t>0$.
An {\em attractor} is a transitive attracting set.
An attractor is {\em nontrivial} if it is
not a closed orbit.

The {\em maximal invariant} set of $X$ is defined by
	$M(X)= \bigcap_{t \geq 0} X_t(M)$.

\begin{defi}\label{defhyp}
		\label{hyperbolic}
		A compact invariant set $\Lambda$ of $X$ is {\em hyperbolic}
		if there are a continuous tangent bundle invariant decomposition
		$T_{\Lambda}M=E^s\oplus E^X\oplus E^u$ and positive constants
		$C,\lambda$ such that

		\begin{itemize}
		\item $E^X$ is the vector field's
		direction over $\Lambda$.
		\item $E^s$ is {\em contracting}, i.e.,
		$
		\lVert DX_t(x) \left|_{E^s_x}\right.\rVert
		\leq Ce^{-\lambda t}$,
		for all $x \in \Lambda$ and $t>0$.
		\item $E^u$ is {\em expanding}, i.e.,
		$
		\lVert DX_{-t}(x) \left|_{E^u_x}\right.\rVert
		\leq Ce^{-\lambda t},
		$
		for all $x\in \Lambda$ and $t> 0$.
		\end{itemize}
\end{defi}
A compact invariant set $\Lambda$ has a {\em dominated splitting} with
respect to the tangent flow if there are an invariant splitting 
$T_{\Lambda}M = E\oplus F$ and positive numbers $K,\lambda$
such that

$$\lVert DX_t(x)e_x\rVert\cdot \lVert f_x\rVert\leq Ke^{-\lambda t} \lVert DX_t(x)f_x\rVert\cdot \lVert e_x\rVert,\qquad
\forall x\in\Lambda, t \geq 0, (e_x , f_x) \in E_x\times F_x .$$

Notice that this definition allows every compact invariant set $\Lambda$
to have a dominated splitting with respect to the tangent flow (See \cite{bamo}):
Just take $E_x = T_xM$ and $F_x = 0$,
for every $x\in\Lambda$ (or $E_x = 0$ and $F_x = T_x M$ for 
every $x\in\Lambda$). 
A compact invariant set $\Lambda$ is {\em partially hyperbolic} if
it has a {\em partially hyperbolic splitting}, i.e., a dominated splitting $T_{\Lambda}M = 
E\oplus F$ with respect to the tangent flow
whose dominated subbundle $E$ is contracting in the sense of
Definition \ref{defhyp}.

The Riemannian metric $\langle\cdot ,\cdot\rangle$ of $M$
induces a $2$-Riemannian metric \cite{mv},
$$\langle u, v/w\rangle_p= \langle u, v\rangle_p\cdot
\langle w, w\rangle_p - \langle u, w\rangle_p\cdot \langle v, w\rangle_p,\quad
\forall p\in M, \forall u, v, w \in T_p M.$$
This in turns induces a 2-norm \cite{gah} (or areal metric \cite{kat}) defined by
$$\lVert u, v\rVert =\sqrt{\langle u, u/v\rangle_p}
\qquad \forall p\in M, \forall u, v \in T_p M.$$ 

Geometrically, $\lVert u, v\rVert$ represents the area of the paralellogram 
generated by $u$ and $v$ in $T_p M$.

If a compact invariant set $\Lambda$ has a dominated splitting $T_{\Lambda}M = E \oplus F$ with
respect to the tangent flow, then we say that its central subbundle $F$ is {\em sectionally
expanding} if

$$\lVert DX_t (x)u, DX_t (x)v\rVert \geq K^{-1}e^{\lambda t}
\lVert u, v\rVert, \quad\forall x \in \Lambda, u,v\in F_x, t \geq 0.$$

By a {\em sectional-hyperbolic splitting} for $X$ over $\Lambda$ we mean a partially hyperbolic
splitting $T_{\Lambda}M = E\oplus F$ whose central subbundle $F$ is sectionally expanding.

\begin{defi}
A compact invariant set $\Lambda$ is {\em sectional-hyperbolic} for $X$ if 
its singularities are hyperbolic and if there is a sectional-hyperbolic splitting for $X$ over $\Lambda$.
\end{defi}

\begin{defi}
\label{secflow}
We say that $X$ is a {\em sectional-Anosov flow} if $M(X)$ is a sectional-hyperbolic
set.
\end{defi}

The Invariant Manifold Theorem [3] asserts that if $x$ belongs
to a hyperbolic set $H$ of $X$, then the sets

$$W^{ss}_X(p)  =  \{x\in M:d(X_t(x),X_t(p))\to 0, t\to \infty\} \qquad and$$
$$W^{uu}_X(p)  =  \{x\in M:d(X_t(x),X_t(p))\to 0, t\to -\infty\},$$

are $C^1$ immersed submanifolds of $M$ which are tangent at $p$ to the subspaces $E^s_p$ and $E^u_p$ of $T_pM$ respectively.

$$W^{s}_X(p) =   \bigcup_{t\in I\!\! R}W^{ss}_X(X_t(p))\qquad\and\qquad W^{u}_X(p)  =   \bigcup_{t\in I\!\! R}W^{uu}_X(X_t(p))$$

are also $C^1$ immersed submanifolds tangent to $E^s_p\oplus E^X_p$ and $E^X_p\oplus E^u_p$ at $p$ respectively.

Recall that a singularity of a vector field is hyperbolic if
the eigenvalues of its linear part
have non zero real part.

\begin{defi}
\label{ll}
We say that a singularity $\sigma$ of a sectional-Anosov flow $X$ is {\em Lorenz-like}
if  it has three real eigenvalues $\lambda^{ss},\lambda^{s},\lambda^u$ with $\lambda^{ss}<\lambda^s<0<-\lambda^s<\lambda^u$.
such that the real part of the remainder eigenvalues are outside the compact interval $[\lambda^{s},\lambda^u]$. 
$W^s_X(\sigma)$ is the manifold associated to the eigenvalues with negative real part. 
The strong stable foliation\index{Strong stable foliation} associated to $\sigma$ and denoted by
$\mathcal{F}^{ss}_X(\sigma)$, is the foliation contained in $W^s_X(\sigma)$ which is tangent
to space generated by the eigenvalues with real part less than $\lambda^{s}$.
\end{defi}

\begin{defi}
A periodic orbit of $X$ is the orbit of some $p$ for which there is a minimal
$t > 0$ (called the period) such that $X_t(p) = p$. An orbit is called closed if it is a periodic orbit
or a singularity.
\end{defi}

A homoclinic orbit of a hyperbolic periodic orbit $O$ is an orbit $\gamma\subset W^s(O)\cap W^u(O)$.
If additionally $T_qM = T_qW^s(O) + T_qW^u(O)$ for some (and hence all) point $q\in\gamma$, 
then we say that $\gamma$ is a transverse homoclinic orbit of $O$. 
The homoclinic class $H(O)$ of a hyperbolic periodic orbit $O$ is the closure of the 
union of the transverse homoclinic orbits of $O$. 
We say that a set $\Lambda$ is a homoclinic class if $\Lambda = H(O)$ for some hyperbolic periodic orbit $O$.

\begin{defi}
A Venice mask is a sectional-Anosov
flow with dense periodic
orbits which is not transitive.
\end{defi}

If $A$ is a compact invariant set of $X$ we denote $Sing_X(A)$ the set of
singularites of $X$ in $A$, and $Sing(X)=Sing_X(M(X))$. The closure of $B\subset M$
is denoted by $Cl(B)$. With these definitions we can state our main results.

\section{Main statements}

We show that if $X$ is a {\em Venice mask}\index{Venice mask}
supported on a compact $3$-manifold, then the omega-limit set of  
every non-recurrent point in the unstable manifold of some singularity is a closed orbit. In addition, we prove that 
the intersection of two different homoclinic classes in the maximal invariant set of a sectional-Anosov flow can be 
decomposed as the disjoint union of, singular points, a non-singular hyperbolic set, and regular points whose 
{\em alpha-limit set} and {\em omega-limit set} is formed by singular points or hyperbolic sets.\\


Specifically, we have the following statements.

\begin{maintheorem}
\label{thH}
If $X$ is a three-dimensional Venice mask and $\sigma$ is a singularity of $X$, then for every 
$q\in W^u_X(\sigma)$ such that $q$ is non-recurrrent we have the following dichotomy:
\begin{itemize}
\item $\omega_X(q)\in Sing(X)$.
\item $\omega_X(q)=O$, where $O$ is a hyperbolic periodic orbit.
\end{itemize}

\end{maintheorem}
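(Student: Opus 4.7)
The plan is to split into two cases depending on whether $\omega_X(q)$ contains a singularity. Since $M$ is three-dimensional and $X$ is sectional-Anosov, each singularity $\sigma'\in\si(X)$ is Lorenz-like (Definition \ref{ll}) and $W^u_X(\sigma')$ is a one-dimensional immersed submanifold. Because $q$ is a regular point of $W^u_X(\sigma)$ we have $\alpha_X(q)=\{\sigma\}$, while non-recurrence means $q\notin\omega_X(q)$; in particular $\omega_X(q)\subset M(X)$ is compact, invariant, and misses the orbit of $q$ itself.

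\smallskip

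First I treat Case 1, where $\omega_X(q)\cap\si(X)\neq\emp$, aiming to show $\omega_X(q)$ is a single singularity. Pick $\sigma'\in\omega_X(q)\cap\si(X)$ and an isolating block $U$ of $\sigma'$ adapted to its Lorenz-like normal form, in which orbits entering $U$ are either absorbed along $W^s_{\text{loc}}(\sigma')$ or expelled along one of the two branches of $W^u_{\text{loc}}(\sigma')$, with the choice of branch dictated by the strong stable foliation $\mathcal{F}^{ss}_X(\sigma')$. Since $\sigma'\in\omega_X(q)$ the forward orbit of $q$ reenters $U$ infinitely often and arbitrarily close to $W^s_{\text{loc}}(\sigma')$. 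The task is to rule out the scenario in which these returns are repeatedly expelled and follow a bounded-away-from-$\sigma'$ excursion: the sectional-connecting lemma of \cite{bm2} applied to such excursions, together with $q\in W^u_X(\sigma)$ and the one-dimensionality of $W^u_X(\sigma)$, should produce a compact invariant piece of $\omega_X(q)$ that is incompatible with the non-recurrence of $q$. The only remaining possibility is that the orbit of $q$ is eventually absorbed along $W^s_{\text{loc}}(\sigma')$, so $\omega_X(q)=\{\sigma'\}$.

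\smallskip

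For Case 2, $\omega_X(q)\cap\si(X)=\emp$, $\omega_X(q)$ is a non-singular compact invariant subset of the sectional-hyperbolic set $M(X)$, hence uniformly hyperbolic. Because the orbit of $q$ accumulates on $\omega_X(q)$ and $\omega_X(q)$ is hyperbolic, there is a point $p\in\omega_X(q)$ with $d(X_t(q),X_t(p))\to 0$ as $t\to+\infty$, so $\omega_X(q)=\omega_X(p)$ is transitive. The spectral decomposition together with the density of periodic orbits in $M(X)$ gives two options: $\omega_X(q)$ is a periodic orbit, or $\omega_X(q)$ is a non-trivial homoclinic class $H(O)$. To exclude the second option I would pass to a cross-section $\Sigma$ transverse to $\omega_X(q)$, where the return map is hyperbolic with a one-dimensional stable foliation; the forward orbit of $q$ hits $\Sigma$ in a discrete sequence whose accumulation set is $\omega_X(q)\cap\Sigma$, and using that $W^u_X(\sigma)\cap\Sigma$ sits on a single branch of a one-dimensional manifold and that $q$ is non-recurrent, this discrete sequence can only accumulate on one periodic point, yielding $\omega_X(q)=O$.

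\smallskip

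The main obstacle I foresee is this final step of Case 2, namely excluding that $\omega_X(q)$ is a non-trivial hyperbolic homoclinic class. A non-trivial transitive hyperbolic set typically has its unstable lamination crossing infinitely many stable leaves, so the Poincaré-section argument must leverage precisely the combined constraints that $W^u_X(\sigma)$ is one-dimensional, that $q$ lies on a single branch of it, and that $q\notin\omega_X(q)$, in order to rule out this richer behavior.
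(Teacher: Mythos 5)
There is a genuine gap: the two steps you flag as ``to be ruled out'' are precisely the content of the theorem, and your sketch supplies no mechanism for either. In Case 1, an orbit can re-enter a neighborhood of a Lorenz-like singularity infinitely often, arbitrarily close to $W^s_{\mathrm{loc}}(\sigma')$, without ever being ``absorbed'' --- that is exactly what happens for points whose omega-limit set is a Lorenz-type singular set, so the dichotomy ``absorbed or expelled'' is false as stated. Non-recurrence of $q$ by itself does not make a nontrivial singular $\omega_X(q)$ impossible; what forbids it is the Venice-mask structure (dense periodic orbits, intransitivity, Property $(P)$), which your argument never actually invokes. In Case 2 there are two problems. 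First, the asymptotic-phase claim (``there is $p\in\omega_X(q)$ with $d(X_t(q),X_t(p))\to0$, hence $\omega_X(q)$ is transitive'') requires shadowing on a locally maximal hyperbolic set containing $\omega_X(q)$; a hyperbolic $\omega$-limit set need not be locally maximal nor contained in a locally maximal one, so transitivity is not free. Second, and more seriously, the exclusion of a nontrivial basic set is asserted but not argued: if $\omega_X(q)$ were a nontrivial hyperbolic set, the returns of $O^+(q)$ to a cross-section would accumulate on a Cantor set of stable leaves, and nothing in ``$W^u_X(\sigma)$ is one-dimensional'' or ``$q\notin\omega_X(q)$'' contradicts that.

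The paper closes both gaps by a completely different route, which you would need to reproduce in some form: it characterizes closed-orbit omega-limit sets via Theorem~\ref{thPsigma} (the Property $(P)_{\Sigma}$ criterion of \cite{bamo2}) and then verifies the hypothesis by constructing, using dense periodic orbits, Property $(P)$, the decomposition of $M(X)$ into homoclinic classes, and Lemma~\ref{lem41}/Corollary~\ref{clly5}, open arcs $I^{\pm}$ with $q\in\partial I$ on which the stable manifolds of singularities are dense (Propositions~\ref{prop31} and~\ref{prop41}); this yields singular partitions of $\omega_X(q)$ of arbitrarily small size (Lemma~\ref{lem6}) and forces $(P)_{\Sigma}$, hence the dichotomy. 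Without some substitute for this construction --- i.e.\ without using intransitivity and the density of periodic orbits in an essential way --- the proposal does not prove the theorem.
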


\begin{maintheorem}
\label{thH'}
The intersection of two different homoclinic classes $H_1,H_2$ in the maximal invariant set of a sectional-Anosov flow
$X$ is the disjoint union of a set $S$ (possibly empty) of singularities, a non-singular hyperbolic set $H$ (possibly
empty), and a set $R$ (possibly empty) of regular points \index{Regular point} such that if $q\in R$ then 
$\alpha_X(q)\subset H\cup S$ and $\omega_X(q)\subset H\cup S$.
\end{maintheorem}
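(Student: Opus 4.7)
The plan is to identify the decomposition directly from the orbit structure of $\Lambda := H_1 \cap H_2$. I first observe that $\Lambda$ is compact and invariant (as an intersection of compact invariant sets) and contained in $M(X)$, so it inherits the sectional-hyperbolic splitting $T_\Lambda M = E \oplus F$ from that of $M(X)$. Set $S := \Lambda \cap \operatorname{Sing}(X)$, a finite set of hyperbolic singularities; this is the singular component of the decomposition.

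Next I would define the hyperbolic component as
\[
H := \bigl\{\, q \in \Lambda : q \text{ is regular and } \overline{\{X_t(q):t\in\mathbb{R}\}} \cap S = \emptyset \,\bigr\},
\]
i.e.~the union of orbits in $\Lambda$ whose closure avoids $S$, equivalently, the maximal invariant subset of $\Lambda$ contained in $\Lambda\setminus S$. I would show $H$ is closed in $\Lambda$ using the Invariant Manifold Theorem together with the local saddle structure at each $\sigma \in S$: an orbit of $H$ passing arbitrarily close to $\sigma$ would have to travel along $W^s_X(\sigma) \cup W^u_X(\sigma)$ locally, forcing $\sigma$ into its $\alpha$- or $\omega$-limit and contradicting the defining condition of $H$. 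Once $H$ is compact, invariant, and singularity-free, the standard result that a compact invariant sectional-hyperbolic set without singularities is uniformly hyperbolic promotes $H$ to a hyperbolic set.

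Finally, set $R := \Lambda \setminus (S \cup H)$, which consists of regular points whose orbit closure meets $S$. To verify $\alpha_X(q), \omega_X(q) \subset S \cup H$ for each $q \in R$, I argue that $\omega_X(q)$ contains no point of $R$: for regular $p \in \omega_X(q)$, the closure $\overline{\{X_t(p):t\in\mathbb{R}\}} \subset \omega_X(q) \subset \Lambda$ is a compact invariant subset, and combining the maximality of $H$ with the sectional-connecting lemma of \cite{bm2} and the distinct-homoclinic-class hypothesis $H_1 \neq H_2$ rules out $p \in R$, forcing $p \in H \cup S$. The argument for $\alpha_X(q)$ is symmetric.

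The main obstacle is twofold: first, the closure argument for $H$, which requires careful local analysis at each $\sigma \in S$ to show that orbits staying away from the singularities in limit cannot accumulate on them in position; and second, the exclusion of $R$-points from the limit sets of other $R$-points, which relies essentially on the hypothesis $H_1 \neq H_2$, since for a general sectional-hyperbolic invariant set such nested heteroclinic cascades can a priori occur.
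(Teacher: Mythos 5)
Your decomposition has the right overall shape (the same three pieces as the paper, and the same use of the Hyperbolic Lemma to promote the non-singular part), but both of the ``main obstacles'' you flag are the entire mathematical content of the theorem, and your sketch for each either assumes what must be proved or points at the wrong tool. The paper's proof rests on one local structure result (its Lemma~\ref{lem7}): for $\delta>0$ small, $\Lambda\cap B_{\delta}(\sigma)\subset W^s_X(\sigma)\cup W^u_X(\sigma)$ for every $\sigma\in S$. Your claim that ``an orbit of $H$ passing arbitrarily close to $\sigma$ would have to travel along $W^s_X(\sigma)\cup W^u_X(\sigma)$ locally'' is exactly this statement, and it is false for a general sectional-hyperbolic invariant set: in the geometric Lorenz attractor, orbits pass arbitrarily close to the singularity while lying on neither invariant manifold, and points of $W^s_X(\sigma)$ are accumulated by periodic orbits whose closures avoid $\sigma$ --- so the set you call $H$ (orbits whose closure avoids $S$) need not be closed. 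The statement holds for $\Lambda=H_1\cap H_2$ only by a genuinely non-local argument: given $x_n\in\Lambda$ with $x_n\to\sigma$ and $x_n\notin W^s_X(\sigma)\cup W^u_X(\sigma)$, one uses the density of periodic orbits in each homoclinic class, a uniform lower bound on the size of their stable manifolds, and the passage of the orbits of $x_n$ near the saddle to manufacture a point $x\in W^u(O_1')\cap W^s(O_2')$ with $O_1'\subset H_1\setminus H_2$ and $O_2'\subset H_2$; invariance and closedness of $\Lambda$ then force $O_1'\subset\Lambda\subset H_2$, a contradiction. This is the only place where the hypothesis $H_1\neq H_2$ is used, and nothing in your proposal produces it.

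Your second step --- excluding $R$-points from $\omega_X(q)$ via ``maximality of $H$ plus the sectional-connecting lemma'' --- is not an argument: the connecting lemma constructs orbits with prescribed limit sets and yields no obstruction here. Once the local lemma is available, this step needs no such input. Define $H$ as the paper does, namely the set of points of $\Lambda$ whose full orbit avoids the fixed balls $B_{\delta}(\sigma)$, $\sigma\in S$; this is manifestly compact, invariant and non-singular, hence hyperbolic by Lemma~\ref{lem1}. For $q\in R$ some $X_t(q)$ lies in some $B_{\delta}(\sigma)$, hence in $W^s_X(\sigma)\cup W^u_X(\sigma)$ by Lemma~\ref{lem7}; then either the forward orbit eventually avoids all the balls, in which case $\omega_X(q)\subset H$, or it enters some $B_{\delta}(\rho)$ at arbitrarily large times, in which case $q\in W^s_X(\rho)$ and $\omega_X(q)=\{\rho\}\subset S$; the claim for $\alpha_X(q)$ is symmetric. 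So the gap in your proposal is concentrated entirely in the missing local lemma, which is where the real work lies.
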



 \section{Preliminaries}
\label{prelim}

We mention the following results which are essentials to proving the theorems.

\begin{thm}[\cite{mpp2}]
\label{th1}
Let $\Lambda$ be a sectional-hyperbolic set with dense periodic orbits. \index{Dense periodic orbits} Then,
every $\sigma\in Sing_X(\Lambda)$ is Lorenz-like and satisfies $\Lambda\cap \mathcal{F}^{ss}_X(\sigma) =\{\sigma\}$.
\end{thm}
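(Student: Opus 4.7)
The statement is a conjunction of two facts about each $\sigma \in \si_X(\La)$: (a) the spectrum of $DX(\sigma)$ satisfies the Lorenz-like inequalities of Definition \ref{ll}, and (b) the strong stable leaf $\mathcal{F}^{ss}_X(\sigma)$ meets $\La$ only at $\sigma$. The plan is to prove them in that order, both times using the sectional-hyperbolic splitting $T_\La M = E \oplus F$, its continuity on $\La$, and the density of periodic orbits.

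For part (a), at the singularity the decomposition $E_\sigma \oplus F_\sigma$ is $DX(\sigma)$-invariant, so the spectrum splits along the two blocks. Contraction of $E$ forces every eigenvalue of $DX(\sigma)|_{E_\sigma}$ to have strictly negative real part, while the sectional-expansion inequality, applied to a $DX_t(\sigma)$-invariant $2$-plane inside $F_\sigma$, forces the sum of real parts of any two eigenvalues of $DX(\sigma)|_{F_\sigma}$ to be strictly positive; hyperbolicity of $\sigma$ excludes purely imaginary eigenvalues. I would then exclude the remaining bad case, in which both eigenvalues on $F_\sigma$ have positive real part (so $\dim W^u_X(\sigma) = 2$), by exploiting dense periodic orbits: a periodic orbit $O \subset \La$ sufficiently close to $\sigma$ would be pushed away along the $2$-dimensional unstable manifold and, applying the $\lambda$-lemma at its first return to a neighborhood of $\sigma$, would have to meet the low-dimensional $W^s_X(\sigma)$ transversally, a configuration incompatible with $O$ being a closed orbit that also carries the dominated invariant plane $F$ close to $F_\sigma$. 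Thus $F_\sigma = E^s_\sigma \oplus E^u_\sigma$ splits as two real lines of opposite sign. Sectional expansion on this plane then yields $\lambda^s + \lambda^u > 0$, i.e.\ $\lambda^u > -\lambda^s$, and the domination $E \prec F$ gives $\lambda^{ss} < \lambda^s$ for the least-contracting eigenvalue on $E_\sigma$; this is exactly the Lorenz-like condition.

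For part (b), suppose toward a contradiction that $p \in \La \cap \mathcal{F}^{ss}_X(\sigma)$ with $p \neq \sigma$. By (a) the leaf $\mathcal{F}^{ss}_X(\sigma)$ is a one-dimensional $X$-invariant curve tangent to $E_\sigma$ at $\sigma$, and $p$ is regular. Along the forward orbit $X_t(p) \to \sigma$ with direction of approach tangent to $E^{ss}_\sigma = E_\sigma$. The contradiction comes from a mismatch of limit directions. First, for any regular $q \in \La$ one has $X(q) \in F_q$: if $X(q) = X^E + X^F$ with $X^E \in E_q \setminus\{0\}$, then invariance of the splitting and uniform contraction of $E$ would make $\lVert DX_{-s}(q) X^E\rVert \geq C^{-1} e^{\la s}\lVert X^E\rVert$, but $\lVert X(X_{-s}(q))\rVert$ stays bounded on the compact set $\La$, forcing $X^E = 0$. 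Second, continuity of the splitting extended at $\sigma$ gives $E_{X_t(p)} \to E_\sigma$ and $F_{X_t(p)} \to F_\sigma$ as $t \to \infty$. Normalizing, $X(X_t(p))/\lVert X(X_t(p))\rVert$ accumulates in $F_\sigma$ by the first fact, but also accumulates in $E_\sigma$ because $X_t(p)$ traverses the curve $\mathcal{F}^{ss}_X(\sigma)$ whose tangent at $\sigma$ equals $E_\sigma$. Since $E_\sigma \cap F_\sigma = \{0\}$ and each accumulation point is a unit vector, this is impossible.

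The principal obstacle is part (a): cleanly excluding the doubly-unstable configuration on $F_\sigma$ using only dense periodic orbits. A rigorous argument likely proceeds through Hartman--Grobman linearization in a neighborhood of $\sigma$ combined with a careful $\lambda$-lemma analysis of a periodic orbit accumulating on $\sigma$, together with the continuity of the $F$-plane field to transport the obstruction from $\sigma$ to the periodic orbit. Once (a) is in place, part (b) reduces to the direction-of-flow argument sketched above and presents no further difficulty.
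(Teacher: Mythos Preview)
The paper does not prove Theorem~\ref{th1}: it is quoted verbatim from the reference \texttt{mpp2} in the Preliminaries section and is used as a black box throughout. There is therefore no ``paper's own proof'' to compare against; what you have produced is a sketch of the argument that would be found in the cited source.

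Your outline is along the standard lines and part~(b) is essentially correct: the key fact $X(q)\in F_q$ for every regular $q\in\La$, together with continuity of the splitting, forces a unit flow-direction vector along $\mathcal{F}^{ss}_X(\sigma)$ to accumulate simultaneously in $E_\sigma$ and $F_\sigma$, which is impossible. (To make the $X\in F$ step airtight you should invoke domination as well as contraction of $E$, so that the backward $E$-component cannot be cancelled by the $F$-component.)

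The real gap is the one you yourself flag in part~(a): your $\lambda$-lemma story for excluding the case where $DX(\sigma)|_{F_\sigma}$ has only eigenvalues with positive real part is not an argument, and the Hartman--Grobman route you propose at the end would be laborious. A much cleaner exclusion reuses the same device as part~(b). If every eigenvalue on $F_\sigma$ had positive real part, then $W^s_X(\sigma)$ would be tangent to $E_\sigma$ at $\sigma$. Density of periodic orbits gives regular points $p_n\in\La$ with $p_n\to\sigma$; a Hartman--Grobman box around $\sigma$ forces the orbit segments of $p_n$ to enter along directions close to $T_\sigma W^s_X(\sigma)\subset E_\sigma$, so the normalized flow vector $X(p_n)/\lVert X(p_n)\rVert$ accumulates in $E_\sigma$. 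But $X(p_n)\in F_{p_n}\to F_\sigma$, the same contradiction as in~(b). Once this case is ruled out, your remaining inequalities ($\lambda^s+\lambda^u>0$ from sectional expansion, $\lambda^{ss}<\lambda^s$ from domination) give the Lorenz-like condition directly.
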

 
We observe that $W^s_X(\sigma)\setminus \mathcal{F}^{ss}_X(\sigma)$ is decomposed by two connected components 
\index{Connected components} $W^{s,+}_X(\sigma)$ and $W^{s,-}_X(\sigma)$ (see figure \ref{Ws+}). Hence for a 
Venice mask, a regular point in $M(X)$ contained in the stable manifold \index{Stable manifold} of some singularity
$\sigma$, necessarily is contained either $W^{s,+}_X(\sigma)$ or $W^{s,-}_X(\sigma)$. 

\begin{figure}
\label{Ws+}
\begin{center}
\input{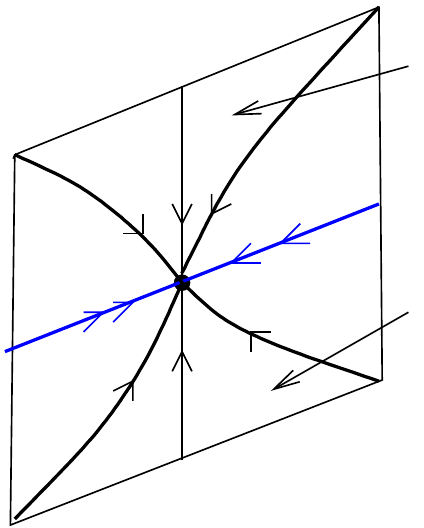_t}
\caption{\label{VS} Connected components.}
\end{center}
\end{figure}

\begin{lemma}[Hyperbolic lemma \cite{mpp2}] 
\label{lem1}
A compact invariant set without singularities of a sectional-hyperbolic
set is hyperbolic saddle-type. \index{Hyperbolic lemma}
\end{lemma}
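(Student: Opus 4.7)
Let $\Gamma\subset\Lambda$ be compact, invariant, and free of singularities. The strategy is to refine the inherited sectional-hyperbolic splitting $T_\Gamma M=E\oplus F$ into a hyperbolic splitting $T_\Gamma M=E^s\oplus\mathbb{R}X\oplus E^u$ of saddle type. The natural candidates are $E^s:=E|_\Gamma$ (already uniformly contracting) and the line field $\mathbb{R}X$ (nontrivial on $\Gamma$ since $X$ has no zeros there), leaving only the construction and uniform expansion of $E^u\subset F$ to be verified. Note also that since $\Gamma$ is compact and avoids $\si(X)$, there exist constants $0<a\leq A$ with $a\leq\|X(x)\|\leq A$ for every $x\in\Gamma$.

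The first key step is to show $X(x)\in F_x$ for every $x\in\Gamma$. Suppose not: write $X(x)=v_E+v_F$ with $v_E\in E_x\setminus\{0\}$. Since $E$ is uniformly contracting in forward time, it is expanding in backward time, so $\|DX_{-t}v_E\|\geq C^{-1}e^{\lambda t}\|v_E\|\to\infty$. By invariance of the splitting, $DX_{-t}v_E$ is precisely the $E$-component of $X(X_{-t}(x))$. Domination implies that $E$ and $F$ are continuous subbundles, so on the compact set $\Gamma$ the angle between them is bounded below by a positive constant; hence the $E$-component has norm comparable to the whole vector. But $\|X(X_{-t}(x))\|\leq A$ for all $t$, a contradiction.

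Now I would quotient out the flow inside $F$. Set $N:=F/\mathbb{R}X$ over $\Gamma$, with induced norm $\|[v]\|_N:=\|X(x),v\|/\|X(x)\|$; the map $DX_t$ preserves both $F$ and $\mathbb{R}X$, so it descends to $N$, and the sectional expansion hypothesis yields
$$\|[DX_t v]\|_N=\frac{\|X(X_t(x)),DX_t v\|}{\|X(X_t(x))\|}\geq \frac{K^{-1}e^{\lambda t}\|X(x)\|}{\|X(X_t(x))\|}\,\|[v]\|_N\geq \frac{a}{KA}\,e^{\lambda t}\|[v]\|_N.$$
This is uniform exponential expansion on the normal bundle. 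To promote it to an honest $DX_t$-invariant subbundle $E^u\subset F$ with $F=\mathbb{R}X\oplus E^u$, I would run a standard graph-transform / invariant-cone-field argument: fix any continuous transverse complement of $\mathbb{R}X$ in $F$, represent nearby complements as graphs over it, and check that the induced map under $DX_t$ is a contraction for large $t$, with the contraction rate inherited from the expansion on $N$. The unique fixed point is the desired $E^u$, and its norm is equivalent to the quotient norm, giving uniform expansion in the usual sense. Combined with contraction on $E^s=E$, this is a hyperbolic splitting of saddle type.

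The main technical obstacle is this last step: passing from quotient expansion on $N$ to an honest $DX$-invariant subbundle $E^u$. The orthogonal complement of $\mathbb{R}X$ in $F$ need not be flow-invariant, so the graph-transform contraction must be set up carefully, particularly when $\dim F\geq 3$. By contrast, the inclusion $\mathbb{R}X\subset F$ and the quotient expansion are short consequences of domination, continuity, and the sectional-expansion hypothesis.
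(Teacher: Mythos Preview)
The paper does not prove this lemma; it is quoted from \cite{mpp2} and used as a black box. There is therefore no in-paper argument to compare against.

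Your sketch is essentially the standard proof and is correct. The inclusion $\mathbb{R}X\subset F$ via boundedness of $\|X\|$ on the compact singularity-free set and backward growth of the $E$-component is the usual argument, and your quotient-norm computation showing uniform expansion on $F/\mathbb{R}X$ is exactly right (note $DX_tX(x)=X(X_t(x))$, which you use implicitly). The more common phrasing in the literature is via the \emph{linear Poincar\'e flow}: one works on the orthogonal normal bundle $N_x=X(x)^\perp$ with $P_t=\pi\circ DX_t$, and sectional expansion yields a hyperbolic splitting for $P_t$ on $N$; this is equivalent to your quotient formulation up to uniformly comparable norms. The step you flag as the main obstacle---lifting quotient expansion to an honest $DX_t$-invariant $E^u\subset F$---is real but standard: besides the graph-transform route you describe, one can set $E^u_x=\{v\in F_x:\sup_{t\ge 0}\|DX_{-t}v\|<\infty\}$ and verify directly that it is a continuous complement to $\mathbb{R}X$ in $F$. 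Either way your plan goes through.
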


\begin{rk}
Theorem \ref{th1} and the Hyperbolic Lemma imply that every Venice mask has singularities, and these are
Lorenz-like.
 
\end{rk}

\begin{defi}
We say that a $C^1$ vector field $X$ with hyperbolic closed orbits
has the Property $(P)$ \index{Property $(P)$} if for every periodic orbit $O$ there is a singularity $\sigma$ such
that 
\begin{equation}
\label{P}
W^u_X(O)\cap W^s_X(\sigma)\neq \emptyset.
\end{equation}

\end{defi}

The above definition is useful by the interesting fact below.

\begin{lemma}
\label{lem2}
Every point in the closure of the periodic orbits of a vector field
with the Property $(P)$ is accumulated by points for which the omega-limit set is
a singularity. 
\end{lemma}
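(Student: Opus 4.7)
The plan is to produce, for every point $x$ in the closure of the periodic orbits, a sequence of approximating points $x_n\to x$ such that each $\omega_X(x_n)$ is a singularity. Pick periodic points $p_n$ with $p_n\to x$, and let $O_n$ be the periodic orbit through $p_n$. By Property $(P)$, for each $n$ there is a singularity $\sigma_n$ of $X$ with $W^u_X(O_n)\cap W^s_X(\sigma_n)\neq\emptyset$. Choose $q_n$ in this intersection. Because the singularities are hyperbolic (closed orbits of $X$ are assumed hyperbolic in the definition of Property $(P)$), membership in $W^s_X(\sigma_n)$ forces $\omega_X(q_n)=\{\sigma_n\}$.

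Next I would exploit that $q_n\in W^u_X(O_n)$. By the definition of the unstable manifold of a periodic orbit, $q_n\in W^{uu}_X(X_{t_0}(p_n))$ for some $t_0\in\mathbb{R}$, so $d(X_t(q_n),X_{t+t_0}(p_n))\to 0$ as $t\to-\infty$. Choosing the sequence of negative times $t_k$ for which $X_{t_k+t_0}(p_n)=p_n$ (which exists because $O_n$ is periodic), we get $X_{t_k}(q_n)\to p_n$. Hence I can pick $T_n$ very negative with $d(X_{T_n}(q_n),p_n)<1/n$, and set $x_n:=X_{T_n}(q_n)$.

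Two properties of $x_n$ finish the argument. First, since $x_n$ lies on the same $X$-orbit as $q_n$, one has $\omega_X(x_n)=\omega_X(q_n)=\{\sigma_n\}$, a singularity. Second, by the triangle inequality
\[
d(x_n,x)\le d(x_n,p_n)+d(p_n,x)<\frac{1}{n}+d(p_n,x)\longrightarrow 0,
\]
so $x_n\to x$. Thus $x$ is accumulated by points whose omega-limit set is a singularity, as required.

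The only nontrivial step I anticipate is the last paragraph, where one must ensure the negative-time orbit of $q_n$ approaches the specific point $p_n$, not merely the orbit $O_n$; this is solved by extracting the return times of $O_n$ through $p_n$ and using the asymptotic strong unstable convergence. Every other ingredient follows directly from the definitions of $W^u_X(O_n)$, $W^s_X(\sigma_n)$, and Property $(P)$.
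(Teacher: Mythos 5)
Your proof is correct. The paper actually states Lemma \ref{lem2} without giving any proof, so there is nothing to compare against; your argument is the natural one and fills that gap. The two points that need care are exactly the ones you address: (i) a point $q_n\in W^u_X(O_n)\cap W^s_X(\sigma_n)$ cannot lie on $O_n$ itself (its omega-limit set is $\{\sigma_n\}$, not the periodic orbit), and (ii) the backward orbit of $q_n$ approaches the specific point $p_n$ along the arithmetic sequence of times $t_k=k\tau_n-t_0$ built from the period $\tau_n$ of $O_n$, which is precisely the "strong unstable plus return times" device you use. With those two observations the triangle inequality finishes the argument.
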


Moreover, we have an important property. 

\begin{lemma}[\cite{mp}]
\label{l3}
Every sectional-Anosov flow with singularities and dense periodic
orbits on a compact 3-manifold has the Property $(P)$.
\end{lemma}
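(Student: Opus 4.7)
I would argue by contradiction: suppose there is a periodic orbit $O$ with $W^u_X(O)\cap W^s_X(\sigma)=\emptyset$ for every singularity $\sigma$. Combining Theorem~\ref{th1} with Lemma~\ref{lem1}, the singularities of $X$ are Lorenz-like, so on the $3$-manifold each $W^s_X(\sigma)$ is two-dimensional while $W^u_X(\sigma)$ is one-dimensional. By Lemma~\ref{lem1} the orbit $O$ is hyperbolic of saddle type, hence $W^u_X(O)$ is a two-dimensional immersed submanifold contained in $M(X)$. First I would observe that under the contradiction hypothesis the forward orbit of any $q\in W^u_X(O)$ stays bounded away from $Sing(X)$: if $X_{t_n}(q)\to\sigma$, the Lorenz-like structure near $\sigma$ together with the relation $M(X)\cap \mathcal{F}^{ss}_X(\sigma)=\{\sigma\}$ from Theorem~\ref{th1} would force $q$ to lie in $W^s_X(\sigma)$, contrary to hypothesis.

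The key step is to set up the standard cross-section apparatus for sectional-Anosov flows on $3$-manifolds. Near each Lorenz-like $\sigma$ I would fix two small rectangles $\Sigma^{\pm}_\sigma$ transverse to the flow, each meeting $W^s_X(\sigma)$ along a singular curve $l^{\pm}_\sigma$; together with finitely many auxiliary sections covering the regular part of $M(X)$, these carry a first-return Poincar\'e map $R$ which is well-defined off the singular leaves and, by sectional hyperbolicity, uniformly expands an $R$-invariant one-dimensional unstable cone field transverse to the foliation on each rectangle induced by the strong stable foliation of the nearby singularity. Because $W^u_X(O)$ is flow-saturated and two-dimensional, its trace on each cross-section is a family of $C^1$ arcs tangent to this unstable cone field.

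Finally, I would pick a short unstable arc $\gamma\subset W^u_X(O)$ through a point of $O$ lying inside some cross-section. By the first paragraph its iterates $R^n(\gamma)$ remain in the cross-section system, and by the contradiction hypothesis they never cross any singular leaf $l^{\pm}_\sigma$ (otherwise a lift along the flow would produce a point of $W^u_X(O)\cap W^s_X(\sigma)$). Hence each $R^n(\gamma)$ is a connected unstable arc, and uniform expansion gives $\length(R^n(\gamma))\geq K^{-1}e^{\lambda n}\length(\gamma)\to\infty$, contradicting the boundedness of the cross-section system. The main obstacle is making the cross-section and cone construction precise and, in particular, ensuring that the iterates $R^n(\gamma)$ do not fragment at the boundary of the cross-section system before their length forces a crossing of a singular leaf; this is exactly what the standard Morales--Pacifico--Pujals adapted cross-section technology for Lorenz-like singularities is designed to handle, and density of periodic orbits in $M(X)$ is used to guarantee that the cross-section system actually covers every regular part of $M(X)$ visited by the orbit of $\gamma$.
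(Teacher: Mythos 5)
The paper does not prove this lemma; it is imported verbatim from \cite{mp}, so your proposal has to stand on its own. Its overall strategy (adapted cross sections, expansion of $cu$-arcs, forcing a crossing of a singular stable leaf) is indeed the standard route for such statements, but as written it has a genuine gap at the very first step. You claim that if $W^u_X(O)\cap W^s_X(\sigma)=\emptyset$ for all $\sigma$, then the forward orbit of every $q\in W^u_X(O)$ stays \emph{bounded away} from $Sing(X)$, because $X_{t_n}(q)\to\sigma$ would force $q\in W^s_X(\sigma)$. This is false: $q\notin W^s_X(\sigma)$ only says that $X_t(q)$ does not \emph{converge} to $\sigma$, not that the orbit avoids a neighborhood of $\sigma$. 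In the geometric Lorenz attractor a point with dense orbit accumulates on the singularity along a subsequence without lying in its stable manifold, and Theorem~\ref{th1} does nothing to exclude this. Since that claim is what lets you treat the return map $R$ as a uniformly expanding map with bounded return times, the expansion estimate $\length(R^n(\gamma))\geq K^{-1}e^{\lambda n}\length(\gamma)$ is not justified: near a Lorenz-like singularity the return times are unbounded and the hyperbolicity of the return map on adapted sections is precisely the delicate lemma you would need to prove (or quote). The correct, and available, consequence of the contradiction hypothesis is only that the arcs never \emph{touch} a singular leaf $\Sigma\cap W^s_X(\sigma)$, hence stay connected in one component of $\Sigma\setminus l_\sigma$ --- which you do observe in the last paragraph, but which does not substitute for the missing uniform expansion.

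The concluding contradiction is also not established: a connected curve of arbitrarily large length can perfectly well remain inside a bounded cross-section system, so ``$\length(R^n(\gamma))\to\infty$'' by itself contradicts nothing. What one actually needs is that arcs tangent to the unstable cone field inside a single rectangle are graphs of uniformly bounded slope over the unstable direction, hence have uniformly bounded length, \emph{and} that the growing arc cannot be repeatedly chopped at the boundaries $\partial\mathcal R$ into pieces that individually stay short. You name this fragmentation problem yourself and then defer it wholesale to ``standard Morales--Pacifico--Pujals technology''; but that deferral is the entire content of the lemma, and your stated reason for invoking dense periodic orbits (``so the sections cover the regular part'') is not where that hypothesis enters --- it is needed, via Theorem~\ref{th1} and Lemma~\ref{lem1}, to ensure all singularities are Lorenz-like with $M(X)\cap\mathcal F^{ss}_X(\sigma)=\{\sigma\}$, which is what makes adapted sections near singularities possible at all. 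As it stands the argument is a plausible outline with one false step and the two hard steps left unproved.
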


\begin{rk}
By Lemma \ref{lem2} and Lemma \ref{l3} we can assert that every Venice mask $X$ has the Property $(P)$ and
$W^s(Sing(X))\cap M(X)$ is dense in $M(X)$.
 
\end{rk}

\begin{defi}
Given $\Sigma\subset M$ we say that $q\in M$ satisfies Property $(P)_{\Sigma}$ \index{Property $(P)_{\Sigma}$} if
$Cl(O^+(q))\cap\Sigma =\emptyset$ and there is open arc \index{Open arc} $I$ in $M$ with $q\in\partial I$ such that
$O^+(x)\cap\Sigma \neq\emptyset$ for every $x\in I$.
\end{defi}

We finish to exhibit the preliminar statements with the following characterization. 
\begin{thm}[\cite{bamo2}]
\label{thPsigma}
Let $X$ be a $C^1$ vector field in a compact 3-manifold $M$. If $q\in M$ 
has sectional-hyperbolic omega-limit set $\omega(q)$, then the following properties are
equivalent:

\begin{itemize}
\item $\omega(q)$ is a closed orbit\index{Closed orbit}. 
\item $q$ satisfies $(P)_{\Sigma}$ for some closed subset $\Sigma$. 
\end{itemize}
\end{thm}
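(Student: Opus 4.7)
The plan is to prove the two implications of Theorem~\ref{thPsigma} separately, noting that the sectional-hyperbolicity of $\omega(q)$ only enters in the nontrivial direction.

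For the forward direction, assume $\omega(q)$ is a closed orbit. If $\omega(q)=\{\sigma\}$ is a (necessarily hyperbolic) singularity, then $q\in W^s_X(\sigma)$. I would take a small isolating neighborhood $U$ of $\sigma$, decompose its boundary into entry and exit faces via the flow, and let $\Sigma$ be a closed piece of the exit face disjoint from the finite segment of $O^+(q)$ lying outside $U$. Choosing a small arc $I$ through $q$ transverse to $W^s_X(\sigma)$, the expanding direction of $\sigma$ forces every $x\in I\setminus\{q\}$ eventually to exit $U$ through $\Sigma$, while $Cl(O^+(q))=O^+(q)\cup\{\sigma\}$ misses $\Sigma$ by construction. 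If instead $\omega(q)=O$ is a hyperbolic periodic orbit, an analogous construction works using a tubular isolating neighborhood of $O$, an arc $I$ transverse to $W^s_X(O)$, and an exit piece $\Sigma$ of $\partial U$ avoiding the segment of $O^+(q)$ that lies outside $U$.

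For the converse direction, assume $(P)_\Sigma$ and set $\Lambda=\omega(q)$. Since $Cl(O^+(q))$ is a compact set disjoint from the closed set $\Sigma$, there is an open neighborhood $V$ of $Cl(O^+(q))$ with $\overline{V}\cap\Sigma=\emptyset$. For each $x\in I$ the first exit time $\tau(x)=\inf\{t>0 : X_t(x)\notin V\}$ is finite, whereas $\tau(q)=\infty$; continuity of the flow then forces $\tau(x)\to\infty$ as $x\to q$ along $I$. Taking $x_n\in I$ with $x_n\to q$, a subsequence of the exit points $X_{\tau(x_n)}(x_n)\in\partial V$ converges to a point $z\in\partial V\cap\Lambda$. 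This identifies $\Lambda$ as a basin-boundary for the escape from $V$, and the remaining analysis hinges on the local structure of the sectional-hyperbolic set $\Lambda$.

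I would now split according to whether $Sing_X(\Lambda)=\emptyset$. In the nonsingular case, Lemma~\ref{lem1} makes $\Lambda$ a hyperbolic saddle-type set in a 3-manifold; using local product structure and a Poincar\'e return map on a cross-section transverse to $\Lambda$ near $z$, the arc $I$ must align (in the limit as $x\to q$) with a single local stable leaf through $z$, and the escape condition for iterates of nearby points forces that leaf to be $W^s_{loc}$ of a single hyperbolic periodic orbit, so $\Lambda=O$. In the singular case I would work in adapted coordinates at each hyperbolic singularity of $\Lambda$ and use the partially hyperbolic splitting and sectional expansion to control how $I$ interacts with $W^s_X(\sigma)$, ruling out larger singular invariant sets. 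The main obstacle is precisely this singular case: because the hypothesis does not assume dense periodic orbits, Theorem~\ref{th1} is unavailable to conclude that the singularities of $\Lambda$ are Lorenz-like, so one must argue directly from the dominated, sectionally expanding splitting that no proper sectional-hyperbolic set containing singularities is compatible with the arc-boundary condition $(P)_\Sigma$.
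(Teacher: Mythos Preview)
The paper does not prove Theorem~\ref{thPsigma}; it is quoted in the preliminaries as a result from \cite{bamo2}, so there is no in-paper proof to compare against. I can only assess your attempt on its own merits.

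Your forward direction is fine in outline: for a hyperbolic closed orbit one can indeed build $\Sigma$ as an exit face of an isolating block and take $I$ transverse to the stable manifold.

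The converse direction, however, has genuine gaps. First, a local slip: you choose $V$ open with $Cl(O^+(q))\subset V$, so $\Lambda=\omega(q)\subset V$ and hence $\Lambda\cap\partial V=\emptyset$; the limit point $z$ of your exit points $X_{\tau(x_n)}(x_n)\in\partial V$ therefore cannot lie in $\Lambda$ as you assert. More seriously, even in the nonsingular case your sketch does not rule out that $\Lambda$ is a nontrivial hyperbolic basic set (say a horseshoe): the claim that the arc $I$ ``must align with a single local stable leaf'' and that this forces $\Lambda$ to be a single periodic orbit is exactly the heart of the matter, and nothing you wrote excludes $q\in W^s(\Lambda)$ for a larger $\Lambda$ while points of $I$ still escape to $\Sigma$. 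Finally, the singular case is, as you acknowledge, essentially untouched; this is where the sectional-hyperbolic hypothesis does the real work, and the actual proof in \cite{bamo2} relies on the fine structure near Lorenz-like singularities (singular cross-sections, adapted foliated rectangles, and control of returns) rather than a soft exit-time argument. Your approach via first exit from a neighborhood of $Cl(O^+(q))$ does not seem to access this structure.
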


In Figure \ref{PS} is exhibited the case when the omega-limit set
$\omega(q)$ of the point $q$ is a hyperbolic singularity of saddle-type.  

\begin{figure}
\begin{center}
\input{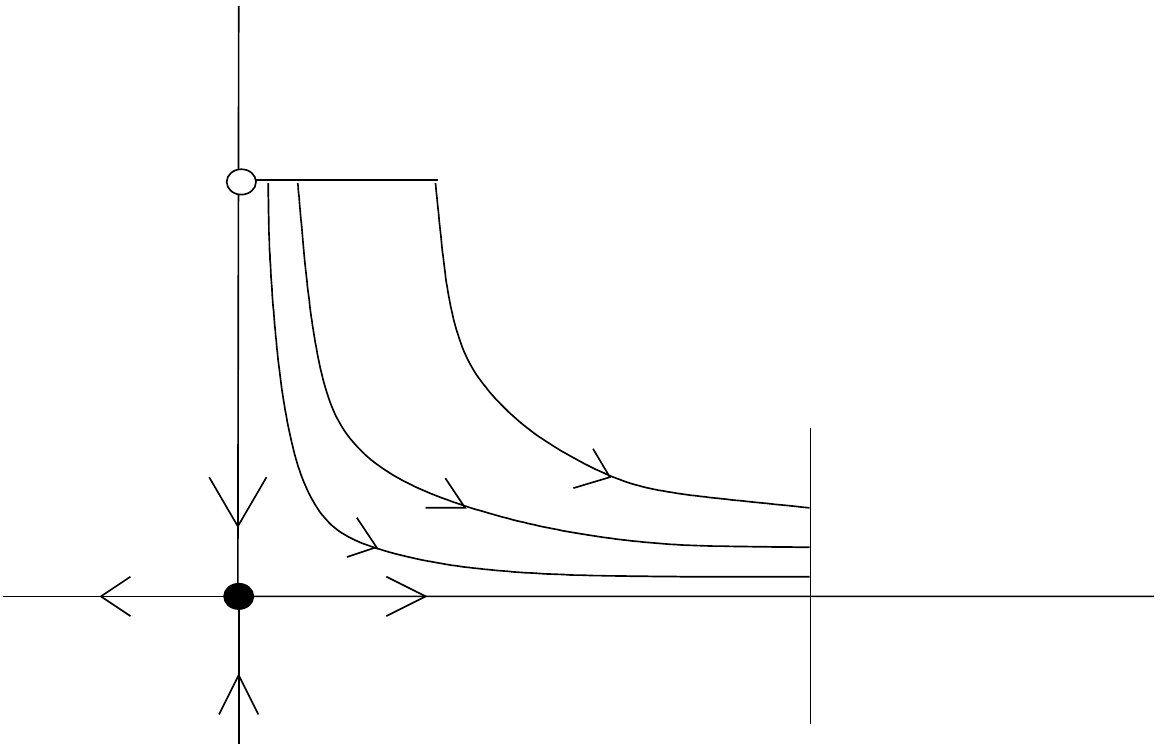_t}
\caption{\label{PS} Property $(P)_{\Sigma}$.}
\end{center}
\end{figure}


\section{Characterizing the omega-limit set}
\label{omegalimit}

In this section we will prove the {\em Theorem \ref{thH}}. The idea is to consider a sequence of points 
satisfying the Property $(P)_{\Sigma}$, which approximates a point $q$ in the unstable manifold of a fixed
singularity. We show that $q$ satisfies the Property $(P)_{\Sigma}$ too. Hereafter in this section, we assume that every
regular point $q\in W^u(Sing(X))$ is non-recurrent.\index{Non-recurrent point}\\

First, we mention some facts of topology. Given a compact metric space $(Y,d)$, define a distance 
function between any point $x$ of $Y$ and any non-empty set $B$ of $Y$ by:

$$d(x,B)=\inf \{ d(x,y) | y \in B \}.$$

Now, consider the collection $\mathcal{C}(Y)=\{C\in Y: C \text{ is a non-empty compact subset of $(Y,d)$}\}$.
For $\mathcal{C}(Y)$, take the Hausdorff metric $d_H$ defined as the distance function between any two 
non-empty sets $A$ and $B$ of $Y$ by:

$$d_H(A,B)=\sup \{ d(x,B) | x \in A \}.$$

\begin{lemma}
\label{l4}
Let $\{A_n:n\in\nat\}$ be a sequence of closed sets contained in a compact metric space $(Y,d)$, such that $A_n\to A$
in the Hausdorff metric induced by $d$. Then $\partial A_n\to \partial A$.
\end{lemma}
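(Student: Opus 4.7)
The plan is to verify Hausdorff convergence of the boundaries through its standard sequential characterization: a sequence of closed sets $B_n$ converges to $B$ in the Hausdorff metric on a compact space if and only if (a) every accumulation point of any sequence $x_n\in B_n$ lies in $B$, and (b) every point of $B$ is a limit of some sequence $x_n\in B_n$. I would apply this with $B_n=\partial A_n$ and $B=\partial A$, using throughout the identity $\partial A_n = A_n\cap \cl(Y\setminus A_n)$ and its analogue for $A$.

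For condition (a), take $x_n\in\partial A_n$ with a convergent subsequence $x_{n_k}\to x$. From $x_{n_k}\in A_{n_k}$, the hypothesis $A_{n_k}\to A$, and closedness of $A$ one immediately gets $x\in A$. To conclude $x\in\partial A$, I would exploit that $x_{n_k}\in\cl(Y\setminus A_{n_k})$: choose $z_{n_k}\in Y\setminus A_{n_k}$ with $d(x_{n_k},z_{n_k})<1/n_k$, so that $z_{n_k}\to x$ as well. A careful use of $A_{n_k}\to A$ combined with the closedness of $A$ should then force $x\in\cl(Y\setminus A)$, which, joined with $x\in A$, puts $x$ in $\partial A$. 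Since $Y$ is compact, verifying this along every convergent subsequence yields $\sup_{x\in\partial A_n}d(x,\partial A)\to 0$.

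For condition (b), fix $y\in\partial A$. Because $y\in A$ and $A_n\to A$, there exist $a_n\in A_n$ with $a_n\to y$. Because $y\in\cl(Y\setminus A)$, one can also take $w_k\in Y\setminus A$ with $w_k\to y$; since $A$ is closed, $d(w_k,A)>0$, and Hausdorff convergence then gives $w_k\in Y\setminus A_n$ for $n$ sufficiently large. Pairing the ``interior'' points $a_n$ with these ``exterior'' points $w_k$ near $y$ and invoking a connecting/continuity argument inside $Y$ should allow me to extract $y_n\in\partial A_n$ with $y_n\to y$, delivering $\sup_{y\in\partial A}d(y,\partial A_n)\to 0$.

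The main obstacle is the asymmetry in information provided by the Hausdorff convergence $A_n\to A$: it controls how $A_n$ sits near $A$, but not directly how $Y\setminus A_n$ sits near $Y\setminus A$. The delicate step is, in each direction, to transfer ``exterior'' information from one side to the other, confirming that the auxiliary approximants really land on the correct side of $A$ (or $A_n$). Once that is done, both pieces of the boundary characterization $\partial=(\cdot)\cap\cl(Y\setminus(\cdot))$ converge, and the Hausdorff convergence of the boundaries follows.
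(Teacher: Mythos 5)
The paper states this lemma without any proof, so there is nothing to compare your argument against; more importantly, the statement itself is false for general closed subsets of a compact metric space, so no strategy can close the gap without extra hypotheses. Take $Y=[0,1]$, $A=[0,1/2]$, and $A_n=\{k/(2n):k=0,\dots,n\}$. Then $d_H(A_n,A)\to 0$, each $A_n$ is finite and hence has empty interior, so $\partial A_n=A_n\to[0,1/2]$, while $\partial A=\{1/2\}$. This counterexample lands exactly on the step of your part (a) that you left vague: from $z_{n_k}\in Y\setminus A_{n_k}$ and $z_{n_k}\to x$ you cannot conclude $x\in \cl(Y\setminus A)$, because Hausdorff convergence of $A_n$ to $A$ carries no information about the complements $Y\setminus A_n$ --- in the example every $x\in(0,1/2)$ arises as such a limit yet lies in the interior of $A$. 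You correctly flagged this asymmetry as ``the main obstacle,'' but it is not merely delicate: it is unbridgeable, and condition (a) simply fails.

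Condition (b) has a separate defect: the ``connecting/continuity argument'' that is supposed to produce a point of $\partial A_n$ between $a_n\in A_n$ and $w_k\notin A_n$ needs a connected set joining them inside a small ball around $y$, i.e.\ local connectedness of $Y$, which a general compact metric space lacks. For instance, with $Y=\{0\}\cup\{1/m:m\in\nat\}$, $A=\{0\}$ and $A_n=\{1/n\}$ one has $A_n\to A$ and $\partial A=\{0\}$, but $\partial A_n=\emptyset$ for every $n$, so no sequence $y_n\in\partial A_n$ exists at all. In the paper's intended setting (subsets of a compact manifold) part (b) can be repaired via exactly the connectedness argument you sketch, but part (a) cannot; any correct version of the lemma must either add hypotheses on the $A_n$ (e.g.\ that they are closures of their interiors with uniformly controlled geometry) or weaken the conclusion to the one-sided statement that $\partial A$ is contained in the limit inferior of the sets $\partial A_n$.
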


For now and on this section, let $M$ be a riemaniann compact 3-manifold, and let $X$ be a Venice mask on $M$. 
So, for a hyperbolic point $p$ of $X$, $W^s_X(p)$ is just denoted by $W^s(p)$. The same interchanging $s$ by $u$.\\

\subsection{Existence of singular partitions}
\label{singpart}

We introduce the following definition which can also be found
in \cite{bau} and \cite{lec}, and extends the notion given in \cite{mp1}. \\

A cross section \index{Cross section} of $X$ is a codimension one submanifold $S$ transverse to $X$. We denote the
interior and the boundary (in topological sense) of $S$ by $Int(S)$ and $\partial S$ respectively. If $\mathcal{R} =
\{S_1 ,\cdots , S_k \}$ is a collection of cross sections we still denote by $\mathcal{R}$ the union of
its elements. Moreover

$$\partial \mathcal{R} :=\bigcup_{i=1}^k\partial S_i\qquad and \qquad Int(\mathcal{R}) :=\bigcup_{i=1}^k Int(S_i)$$ 

The size of $\mathcal{R}$ will be the sum of the diameters of its elements.

\begin{defi}
A singular partition \index{Singular partition} of an invariant set $H$ of a vector field $X$ is a finite disjoint
collection $\mathcal{R}$ of cross sections of $X$ such that $H\cap\partial\mathcal{R}=\emptyset$ and

$$
H\cap Sing(X)=\{y\in H: X_t(y)\notin \mathcal{R}, \forall t\in\re\}.
$$
\end{defi}

For a Lorenz-like singularity \index{Lorenz-like singularity}
$\sigma$, the center unstable manifold \index{Center unstable manifold}
$W_X^{cu}(\sigma)$ associated is divided by $W^u(\sigma)$ and $W^s(\sigma)\cap W^{cu}(\sigma)$ in the four sectors
$s_{11}$, $s_{12}$, $s_{21}$, $s_{22}$. $\pi: V_{\sigma}\to W^{cu}(\sigma)$ is the projection
defined in a neighborhood $V_{\sigma}$ of $\sigma$.  Figure \ref{cusigma} exhibits the case when $\pi(M(X)\cap V_{\sigma})$ 
intersects $s_{11}$ and $s_{12}$.

\begin{figure}
\begin{center}
\input{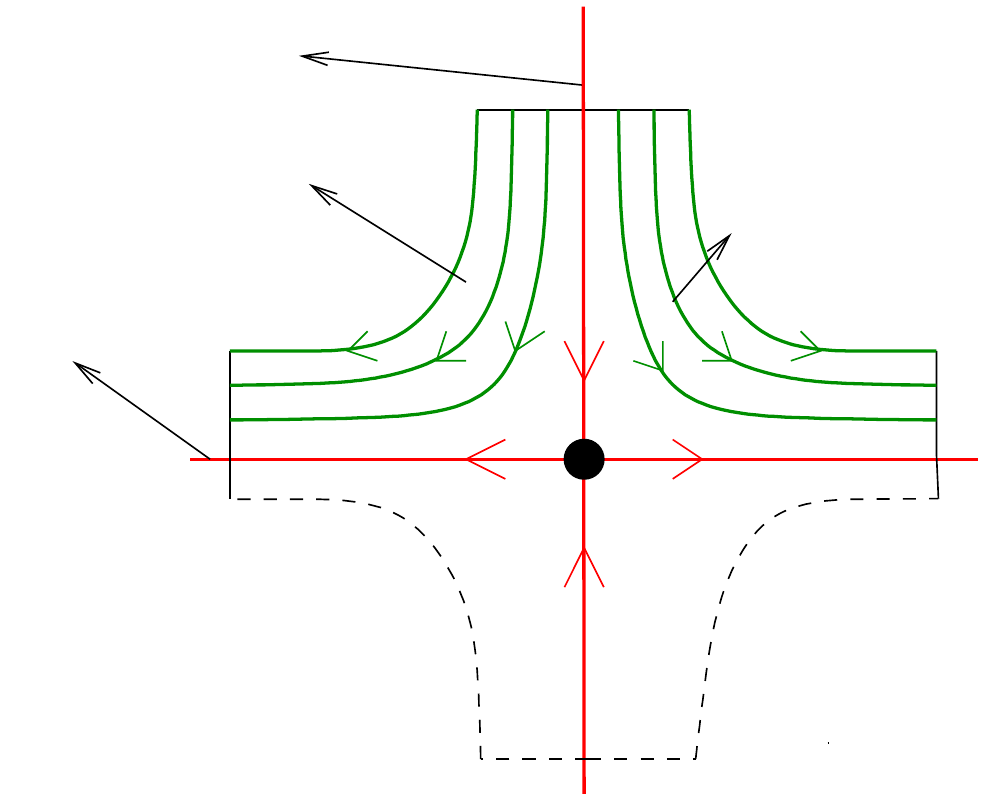_t}
\caption{\label{cusigma} Center unstable manifold of $\sigma$.}
\end{center}
\end{figure}


\begin{lemma}
\label{lem41}
Consider $\sigma$ a Lorenz-like singularity of a Venice mask $X$, and $O$ a hyperbolic periodic orbit satisfying 
$Cl(W^u(O))\cap W^{s,+}(\sigma)\neq\emptyset$ and $Cl(W^u(O))\cap W^{s,-}(\sigma)\neq\emptyset$. Moreover, $\pi(Cl(W^u(O)))
\cap s_{1i}\neq\emptyset$ and $\pi(Cl(W^u(O)))\cap s_{2i}\neq\emptyset$ for some $i\in\{1,2\}$. If $q$ is a regular point
in $W^u(\sigma)\cap Cl(s_{1i})\cap Cl(s_{2i})$, then $O=\omega_X(q)$. 
\end{lemma}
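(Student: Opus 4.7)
The plan is to apply Theorem~\ref{thPsigma}: I will exhibit a closed set $\Sigma$ for which $q$ satisfies property $(P)_\Sigma$, which forces $\omega_X(q)$ to be a closed orbit, and then identify this closed orbit as $O$ by exploiting the geometry of $W^u(O)$ near $\sigma$ and $q$. The blanket non-recurrence assumption of the section gives $q\notin\omega_X(q)$, which I will use to arrange that the chosen $\Sigma$ misses $Cl(O^+(q))$.

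First I exploit the Lorenz-like structure at $\sigma$. The hypothesis that $\pi(Cl(W^u(O)))$ meets both sectors $s_{1i}$ and $s_{2i}$ says that $Cl(W^u(O))$ has representatives in $V_\sigma$ whose projections to $W^{cu}(\sigma)$ lie on opposite sides of $W^u(\sigma)$ along the branch of $W^s(\sigma)\cap W^{cu}(\sigma)$ adjacent to $q$. Near a Lorenz-like singularity the forward orbit of a point close to $W^s(\sigma)$ but not on it is expelled along $W^u(\sigma)$; combined with continuity of the flow, this yields that in every cross section $S_q$ to $X$ through $q$ there are points of $W^u(O)\cap S_q$ converging to $q$ from both local sides of the curve $S_q\cap W^{cu}(\sigma)$. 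In particular $q\in Cl(W^u(O))$, and by invariance of $W^u(O)$ under the flow together with continuity of orbit segments one gets $\omega_X(q)\subset Cl(W^u(O))$.

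Next I construct the arc $I$ and the set $\Sigma$. Pick a point $z_0\in W^u(O)\setminus O$ and a small cross section $\Sigma$ of $X$ at $z_0$ transverse to $W^u(O)$, placed close to $O$ but outside a suitable neighborhood of $O^+(q)$; this is possible because $q$ is non-recurrent, so $O^+(q)$ avoids a small tube around $z_0$ while orbits on $W^u(O)$ still thread through it. Every point of $W^u(O)$ sufficiently close to $z_0$ has forward orbit meeting $\Sigma$ transversely. Using the two-sided accumulation from the previous step, I take $I\subset S_q$ to be an open arc with $q\in\partial I$, lying on one side of $S_q\cap W^{cu}(\sigma)$ and close to a component of $W^u(O)\cap S_q$ that limits on $q$. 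Continuity of the flow over the finite time needed to reach $\Sigma$ gives $O^+(x)\cap\Sigma\neq\emptyset$ for every $x\in I$; together with $Cl(O^+(q))\cap\Sigma=\emptyset$, this shows $q$ satisfies $(P)_\Sigma$. Theorem~\ref{thPsigma} now yields that $\omega_X(q)$ is a closed orbit. A singular limit $\omega_X(q)=\sigma'$ would contradict $\Lambda\cap\mathcal{F}^{ss}_X(\sigma')=\{\sigma'\}$ (Theorem~\ref{th1}) in view of the two-sided accumulation at $q$ combined with $q\in W^u(\sigma)$, so $\omega_X(q)=O'$ is a periodic orbit in $Cl(W^u(O))$. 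Choosing $z_0$ and $\Sigma$ progressively closer to $O$ forces the common limit $O'$ to equal $O$.

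The main obstacle I foresee is the simultaneous choice of $z_0$ and $\Sigma$ such that $Cl(O^+(q))\cap\Sigma=\emptyset$ while every $x\in I$ still has $O^+(x)\cap\Sigma\neq\emptyset$; this is delicate because $\omega_X(q)$ is not a priori known. I expect non-recurrence of $q$, the containment $\omega_X(q)\subset Cl(W^u(O))$, and the sectional-hyperbolicity of $M(X)$ to combine via a tubular-neighborhood argument near $O$, separating $O^+(q)$ from a small cross section through which the nearby arc $I$ is forced to return in forward time.
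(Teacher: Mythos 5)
Your strategy (verify property $(P)_\Sigma$ and invoke Theorem \ref{thPsigma}) is not the paper's route, and it has two genuine gaps, the second of which is fatal. First, the hypothesis $Cl(O^+(q))\cap\Sigma=\emptyset$ of $(P)_\Sigma$ is never established. Non-recurrence of $q$ only says $q\notin\omega_X(q)$; it gives no control over whether the forward orbit of $q$, or its closure, enters a cross section placed at an arbitrary point $z_0\in W^u(O)\setminus O$. Since $\omega_X(q)$ is exactly what you are trying to determine, you cannot position $\Sigma$ so as to avoid $Cl(O^+(q))$ without already knowing the conclusion; you acknowledge this circularity in your last paragraph but do not resolve it. Second, and more seriously, even if $(P)_\Sigma$ were verified, Theorem \ref{thPsigma} only yields that $\omega_X(q)$ is \emph{some} closed orbit; the content of the lemma is the identification $\omega_X(q)=O$. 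Your closing step, ``choosing $z_0$ and $\Sigma$ progressively closer to $O$ forces the common limit $O'$ to equal $O$,'' proves nothing: $\omega_X(q)$ is a fixed set that does not depend on the auxiliary choice of $\Sigma$, and $Cl(W^u(O))$ may contain periodic orbits other than $O$, so no identification follows.

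The paper argues quite differently and more directly: it assumes $q\notin W^s(O)$ and derives a contradiction. Using the two-sided accumulation hypotheses it produces sequences $p_n^+,p_n^-\in W^u(O)$ converging to $q$ whose orbits accumulate points of $W^{s,+}(\sigma)$ and $W^{s,-}(\sigma)$ respectively; since $Cl(O_X(q))\subset Cl(W^u(O))$, the Tubular Flow Box Theorem pushes these sequences forward along $O^+(q)$ so that both families of positive orbits accumulate every point of $W^u(\sigma)$ near $\omega_X(q)$. Because $O$ and $\omega_X(q)$ are disjoint closed invariant sets, these orbits can meet neither $\omega_X(q)$ nor (without forcing the absurdity $W^u(\sigma)\cap W^u(O)\neq\emptyset$) the orbit of $q$, and the non-recurrence of $q$ then contradicts the accumulation on $W^{s,+}(\sigma)$. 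Hence $q\in W^s(O)$ and $\omega_X(q)=O$. If you want to salvage your approach, you would need to replace the $(P)_\Sigma$ detour by an argument of this type that pins down $W^s(O)$ itself.
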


\begin{proof}
We take $q\in W^u(\sigma)$ a regular point close to $\sigma$. We assert that $q\in W^s(O)$. Indeed, if we suppose 
that is not the case, we will get a contradiction. \\

So, we assume $q\in W^u(\sigma)\setminus W^s(O)$. Then, there is a  sequence $p_n^-\to q$ such that
$p_n^-\in W^u(O)$ for all $n$. 
In addition, $\{O_X(p_n^-):n\in\nat\}$ accumulates some regular point $p^-$ in $W^{s,-}(\sigma)$ or in
$W^{s,+}(\sigma)$. We can suppose the accumulation in some point of $W^{s,-}(\sigma)$.
Also, we can take $\{p_n^+:n\in\nat\}\subset W^u(O)$ be a sequence such that $p_n^+\to q$.
Moreover, $\{O_X(p_n^+):n\in\nat\}$ accumulates 
$\sigma$ and some point $p^+$ in $W^{s,+}(\sigma)$.  
We have $p_n^+,p_n^-\notin W^u(\sigma)$ for all $n$. On the other hand, $q\in Cl(W^u(O))$ and the invariance of 
$W^u(\sigma)$ imply $O_X(q)\subset Cl(W^u(O))$. But $Cl(W^u(O))$ is a closed set, therefore $Cl(O_X(q))\subset Cl(W^u(O))$.   
Applying the compactness of $Cl(W^u(O))$ and Tubular Flow Box Theorem \index{Tubular Flow Box Theorem}
\cite{dmp} in a neighborhood of $O^+(q)$ we obtain that $\{O^+(p_n^+):n\in\nat\}$ and $\{O^+(p_n^-)n\in\nat\}$ 
accummulate all point in $W^u(\sigma)$ close to $\omega_X(q)$. \\

As $O$ and $\omega_X(q)$ are invariant closed 
sets, then they are disjoints and $d(x,\omega_X(q))>0$ for all $x\in O$. This implies that there exists $\varepsilon>0$
such that every point $y$ closen
to $\omega_X(q)$  satisfies $d(y,O)>\varepsilon$. Moreover $y\notin O_X(q)$ and, $\{O^+(p_n^+):n\in\nat\}$, 
$\{O^+_X(p_n^-):n\in\nat\}$ acummulate $y$. The positive
orbits of $p_n^+$ and $p_n^-$ cannot intersect $\omega_X(q)$. So, we have two possibilities, either any orbit intersects
$O_X(q)$, or no orbit does it. The first case means that there is a point $w\in W^u(\sigma)\cap W^u(O)$ which is absurd.  
So, neither orbit intersects $O_X(q)$. Now, $q$ is a non-recurrent point. Then, $\{O^+_X(p_n^+):n\in\nat\}$ does not
accumulate on $W^{s,+}(\sigma)$. But this contradicts the 
choice of the sequences. Therefore $q\in W^s(O)$. So, we conclude $O=\omega_X(q)$.

\end{proof}

From {\em Lemma} \ref{lem41} we obtain the following corollary.\\

\begin{clly}
\label{clly5} 
Consider $\sigma$ a Lorenz-like singularity of a Venice mask $X$, and $O$ a hyperbolic periodic orbit satisfying 
$W^u(O)\cap W^{s,+}(\sigma)\neq\emptyset$ and $W^u(O)\cap W^{s,-}(\sigma)\neq\emptyset$. Let $q$ be a regular point in 
$W^u(\sigma)\cap Cl(W^u(O))$ and let $\{p_n:n\in\nat\}\subset Cl(W^u(O))\cap W^s(O)$ be a sequence such that $p_n\to q$.
Then $p_n\in O_X(q)$ for all $n$ large.
\end{clly}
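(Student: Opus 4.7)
The plan is to apply Lemma \ref{lem41} to $q$ to reduce the problem to a cross-section statement, and then to argue by contradiction by rerunning the mechanism of that same lemma on an approximating sequence. Lemma \ref{lem41} gives $\omega_X(q)=O$, so $q\in W^s(O)$ and $O_X(q)\subset W^u(\sigma)\cap W^s(O)$. Because $W^u(\sigma)$ is $1$-dimensional, flow-invariant and regular at $q$, it coincides locally near $q$ with the orbit $O_X(q)$; in particular $W^u(\sigma)\subset W^s(O)$ on a neighborhood of $q$.

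Fix a small cross section $\Sigma$ of $X$ through $q$. Using the standing non-recurrence hypothesis of this section, shrink $\Sigma$ so that $O_X(q)\cap\Sigma=\{q\}$ and so that $W^s(O)\cap\Sigma$ is a single smooth curve $\gamma$ through $q$. For $n$ large, the flow box theorem produces a unique $p'_n\in O_X(p_n)\cap\Sigma$ with $p'_n\to q$; since $W^s(O)$ and $Cl(W^u(O))$ are flow-invariant, $p'_n\in\gamma\cap Cl(W^u(O))$. The conclusion is then equivalent to $p'_n=q$ for all $n$ large.

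Suppose by contradiction that $p'_n\neq q$ for all $n$ along some subsequence. Using $p'_n\in Cl(W^u(O))$ and the flow box around $p'_n$, extract $q_n\in W^u(O)\cap\Sigma$ with $q_n\to q$ and $q_n\neq q$. Now replay the argument of Lemma \ref{lem41} on $\{q_n\}$: since $W^u(O)\cap W^{s,+}(\sigma)\neq\emptyset$ and $W^u(O)\cap W^{s,-}(\sigma)\neq\emptyset$, sub-sequences of $\{O_X(q_n)\}$ accumulate on regular points $p^+\in W^{s,+}(\sigma)$ and $p^-\in W^{s,-}(\sigma)$. Transporting these accumulations via the Tubular Flow Box Theorem along $O^+(q)$ up to $\omega_X(q)=O$, and combining with the non-recurrence of $q$ exactly as in the final paragraph of the proof of Lemma \ref{lem41}, one is forced to produce a point $w\in W^u(\sigma)\cap W^u(O)$. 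This is impossible because every point of $W^u(\sigma)$ has $\alpha$-limit $\sigma$ while every point of $W^u(O)$ has $\alpha$-limit $O$, and the two disagree. Hence $p'_n=q$ for all $n$ large, i.e., $p_n\in O_X(q)$ eventually.

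The main obstacle I foresee is verifying rigorously that the approximants $q_n\in W^u(O)\cap\Sigma$ inherit the two-sided accumulation on $W^{s,\pm}(\sigma)$ used in Lemma \ref{lem41}; this amounts to showing that the hypotheses $W^u(O)\cap W^{s,\pm}(\sigma)\neq\emptyset$ together with the freedom to pass to sub-sequences suffice, without needing the stronger sectoral hypotheses $q\in Cl(s_{1i})\cap Cl(s_{2i})$ explicitly assumed in Lemma \ref{lem41}.
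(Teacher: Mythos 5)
Your opening step --- invoking Lemma \ref{lem41} to get $\omega_X(q)=O$, hence $q\in W^s(O)$ and $O_X(q)\subset W^s(O)$ --- is exactly the paper's entire written proof (it says only that ``it is sufficient to observe that $O_X(q)$ is contained in $W^s(O)$''), and the hypothesis mismatch with Lemma \ref{lem41} that you flag at the end is inherited from the paper itself, so I do not count it against you. The genuine problem is your step (3). After projecting to the cross section you replace the points $p'_n$ by nearby points $q_n\in W^u(O)\cap\Sigma$ with $q_n\to q$, $q_n\neq q$, and claim that rerunning the mechanism of Lemma \ref{lem41} on $\{q_n\}$ forces a point $w\in W^u(\sigma)\cap W^u(O)$. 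It does not. The existence of such a sequence $q_n$ is automatic from $q\in Cl(W^u(O))\setminus W^u(O)$ and is precisely the \emph{starting point} of the proof of Lemma \ref{lem41} (the sequences $p_n^{\pm}$ there), not its absurd conclusion. In that proof the dichotomy is: either some orbit $O^+(p_n^{\pm})$ meets $O_X(q)$ (which gives the impossible $w\in W^u(\sigma)\cap W^u(O)$), or none does; and the second branch is contradictory there only because $q$ was assumed to lie \emph{outside} $W^s(O)$, so that non-recurrence clashes with the required accumulation on $W^{s,+}(\sigma)$. In your situation $q\in W^s(O)$ has already been established, the second branch is perfectly consistent, and no contradiction results.

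More fundamentally, in passing from $p'_n$ to $q_n$ you discard the hypothesis $p_n\in W^s(O)$, which is the only feature distinguishing the given sequence from an arbitrary sequence in $Cl(W^u(O))$ converging to $q$; since arbitrary such sequences certainly need not lie on $O_X(q)$, no argument that never uses $p_n\in W^s(O)$ can prove the corollary. What is missing is a local argument showing that, near $q$, the set $Cl(W^u(O))\cap W^s(O)$ reduces to the orbit segment of $q$ --- for instance by comparing the stable curve $W^s(O)\cap\Sigma$ through $q$ with the lamination of $Cl(W^u(O))\cap\Sigma$ by unstable leaves, the leaf through $q$ being the one carried by $W^u(\sigma)$. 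That is the content the paper compresses into its one-line observation, and it is absent from your proposal.
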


\begin{proof}
For this is sufficient to observe that $O_X(q)$ is contained in $W^s(O)$.

\end{proof}

\begin{rk}
\label{rk3}
{\em Corollary} \ref{clly5} says that for $i\in\{1,2\}$ and for every hyperbolic periodic orbit $O$ of $X$, is not possible
$H(O)\cap s_{1i}\neq\emptyset$ and $H(O)\cap s_{2i}\neq\emptyset$ simultaneously. 
\end{rk}

\begin{lemma}
\label{lem6}
Let  $\sigma$ be a singularity of a Venice mask $X$, and let $O$ be a hyperbolic periodic
orbit such that $W^u(O)\cap W^s(\sigma)\neq\emptyset$. Then for $q\in W^u(\sigma)\setminus\{\sigma\}$,
$\omega_X(q)$ has singular partitions \index{Singular partition} of arbitrarily small size.
\end{lemma}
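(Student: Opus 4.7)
The plan is to first establish that $\omega_X(q)$ is a compact, invariant, sectional-hyperbolic set and then construct the partition directly from its sectional-hyperbolic structure, using the hypothesis only to place $q$ (and hence $\omega_X(q)$) inside $M(X)$. Since $W^u_X(\sigma) \subset M(X)$ and $M(X)$ is closed and invariant, the positive orbit $O^+(q)$ and therefore $\omega_X(q)$ lie in $M(X)$. Hence $\omega_X(q)$ is sectional-hyperbolic and, by the remark following Lemma \ref{lem1}, every $\sigma' \in Sing(X)\cap \omega_X(q)$ is Lorenz-like.

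Next I would fix $\varepsilon>0$ and build the cross-sections in two stages. For each Lorenz-like singularity $\sigma'\in Sing(X)\cap\omega_X(q)$, use Hartman--Grobman linearization in a small neighborhood $V_{\sigma'}$ of diameter less than $\varepsilon$ to produce two cross-sections $S^+_{\sigma'}, S^-_{\sigma'}$, one on each connected component of $W^s_X(\sigma')\setminus \mathcal{F}^{ss}_X(\sigma')$, so that every regular orbit entering $V_{\sigma'}$ crosses $S^+_{\sigma'}\cup S^-_{\sigma'}$ before escaping. On the compact set $K:=\omega_X(q)\setminus \bigcup V_{\sigma'}$, which consists of regular points, cover each $p\in K$ with a tubular flow-box cross-section of diameter less than $\varepsilon$ and extract a finite subcover $S_1,\dots,S_m$ by compactness. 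The candidate partition is $\mathcal{R}=\{S^\pm_{\sigma'}\}\cup\{S_j\}$.

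Now I would shrink each piece so that the family is disjoint (possible because $K$ is disjoint from each $V_{\sigma'}$, and the finite cover lets us refine away overlaps), and then perturb the boundaries so that $\omega_X(q)\cap\partial\mathcal{R}=\emptyset$. The key input here is that on a $2$-dimensional cross-section $S$, the intersection $\omega_X(q)\cap S$ lies on the $1$-dimensional traces of the center-unstable direction given by the sectional-hyperbolic splitting; therefore $\omega_X(q)\cap S$ has empty interior in $S$, and a generic $1$-parameter shrinking of $\partial S$ misses $\omega_X(q)$. This yields a singular partition of size at most a bounded multiple of $\varepsilon$.

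Finally, I would verify the defining property $\omega_X(q)\cap Sing(X)=\{y\in \omega_X(q):X_t(y)\notin\mathcal{R},\ \forall t\in\mathbb{R}\}$. The inclusion $\supset$ is immediate: singularities are fixed and the cross-sections were chosen to avoid $Sing(X)$. For $\subset$, a regular $y\in\omega_X(q)$ either eventually leaves every neighborhood $V_{\sigma'}$ and spends time in $K$, where it must meet some $S_j$ by the flow-box construction, or it stays arbitrarily long near some $\sigma'$, in which case it must cross $S^+_{\sigma'}\cup S^-_{\sigma'}$ by the Lorenz-like geometry. I expect the main obstacle to be the perturbation step that removes $\omega_X(q)$ from the boundaries while preserving both the disjointness of the pieces and the crossing property near each $\sigma'$; this is where the combinatorics of the Lorenz-like sectors $s_{ij}$ (Figure \ref{cusigma}) and the control on $\omega_X(q)\cap S$ coming from sectional hyperbolicity must be used carefully.
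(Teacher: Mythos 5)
Your proposal has a genuine gap at exactly the step you flag as ``the main obstacle,'' and it is not a technical nuisance but the entire content of the lemma. The claim that $\omega_X(q)\cap S$ has empty interior in a two-dimensional cross section $S$, and that therefore a generic shrinking of $\partial S$ misses $\omega_X(q)$, does not work. For a sectional-hyperbolic set the trace on a cross section is typically a lamination by full unstable segments whose transverse structure can be a Cantor set that is \emph{dense} in an open subrectangle (this is precisely what happens for the geometric Lorenz attractor); such a set has empty interior yet meets every curve that separates $z$ from the rest of the rectangle in the unstable direction. So having empty interior gives you no control over the vertical boundary components: you need an actual open subrectangle of $R_z^0$ disjoint from $\omega_X(q)$ through which the boundary can pass. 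Producing that subrectangle is where the hypothesis $W^u(O)\cap W^s(\sigma)\neq\emptyset$ is used essentially, and your remark that you use it ``only to place $q$ inside $M(X)$'' is the tell: $W^u_X(\sigma)\subset M(X)$ holds unconditionally, so your argument would prove that \emph{every} sectional-hyperbolic omega-limit set admits arbitrarily small singular partitions, which is false in general and is exactly what the cited results of \cite{lec} do not claim.

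The paper's proof reduces, via Proposition 3 of \cite{lec}, to building one cross section $\Sigma_z$ per point $z\in\omega_X(q)$ with $\omega_X(q)\cap\partial\Sigma_z=\emptyset$, and then does the real work: the horizontal boundary is handled by showing $\omega_X(q)$ contains no local strong stable manifold (Hyperbolic Lemma, Lemma 11 of \cite{lec}, and the Main Theorem of \cite{mor}), and the vertical boundary is handled by constructing an interval $I$ adjacent to the orbit of $q$, contained in $W^u(O)$ (or in unstable manifolds of auxiliary orbits $O'$, $O_1$, $O_2$ found via Property $(P)$, Remark \ref{rk3} and the decomposition of $M(X)$ into homoclinic classes), in which $W^s(\sigma)\cap I$ is dense; the stable saturation $R_I$ of $I$ then satisfies $Int(R_I)\cap\omega_X(q)=\emptyset$, because otherwise non-recurrence of $q$ or $W^{s,-}(\sigma)\cap M(X)=\emptyset$ would be violated. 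That case analysis (which branch of $W^s(\sigma)\setminus\mathcal{F}^{ss}_X(\sigma)$ the class meets, whether one orbit or two are needed, etc.) is the substance of the proof and is entirely absent from your outline. Your first two stages (linearizing neighborhoods of the Lorenz-like singularities plus a finite flow-box cover of the regular part) are a reasonable skeleton, but without the gap-producing argument the partition cannot be made to satisfy $\omega_X(q)\cap\partial\mathcal{R}=\emptyset$.
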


\begin{proof}
We adapt the proof of {\em Theorem 17} given in \cite{lec}. Observe that $\omega_X(q)$ is 
sectional-hyperbolic. Therefore, if $\omega_X(q)$ is a closed orbit, then {\em Theorem}
\ref{thPsigma} implies that $q$ satisfies the property $(P)_{\Sigma}$ \index{Property $(P)_{\Sigma}$} for some closed 
subset $\Sigma$. Moreover, we can apply {\em Theorem 16} in \cite{lec} to conclude that $\omega_X(q)$ has singular 
partitions of arbitrarily small size.\\

Hereafter, we assume $\omega_X(q)$ is not a closed orbit. By {\em Proposition 3} in \cite{lec} is sufficient to prove that
for all $z\in \omega_X(q)$ there is cross section $\Sigma_z$  close to $z$ such that $z\in Int(\Sigma_z)$ and
$\omega_X(q)\cap\partial\Sigma_z= \emptyset$.\\

We assert that $\omega_X(q)$ cannot contain any local strong stable manifold. \index{Strong stable manifold}
Indeed, we first assume that $\omega_X(q)$ has no singularities. By {\em Hyperbolic lemma}, \index{Hyperbolic lemma}
it is hyperbolic saddle-type. Suppose $\omega_X(q)$ containing a local 
strong stable manifold. Then, by {\em Lemma 11}  in \cite{lec}, $q$ would be a recurrent point. Therefore using 
{\em Lemma 5.6} in \cite{mpa1}, there is $x^*\in Per(X)\cap \omega_X(q)$ such that $q\in W^s_X(x^*)$. This means that 
$\omega_X(q)$ is a periodic orbit  which contradicts our assumption. Now, if $\omega_X(q)$ is a sectional-hyperbolic set
with singularities, applying {\em Main Theorem} in \cite{mor}, $\omega_X(q)$ cannot contain any local strong stable 
manifold.\\

We can fix a foliated rectangle of small diameter $R_z^0$ such that $z\in Int(R_z^0)$ and $\omega_X(q)\cap\partial^hR_0^z
=\emptyset$. By {\em Theorem} \ref{th1}, the intersection of $W^u(O)$ with $W^s(\sigma)$ occurs in some connected component 
$W^{s,+}(\sigma)$ or $W^{s,-}(\sigma)$ (or both). We initially assume the intersection in $W^{s.+}(\sigma)$.\\

Since $z\in\omega_X(q)$ and the omega-limit set is not a closed orbit, we have that 
the positive orbit of $q$ intersects either only one or the two connected components
of $R_z^0\setminus \mathcal{F}^s(z,R^0_z)$. \\

Assume the intersection is occurring in just one component only, we shall consider the following cases:\\

\begin{itemize}
 \item $W^{s,-}(\sigma)\cap M(X)=\emptyset$. 
 
Using this and linear coordinates around $\sigma$, we can construct an open interval $I^+ = I^+_q\subset W^u(O)$,
contained in a suitable cross section throught $q\in W^u(\sigma)\setminus \{\sigma\}$ and $q\in\partial I^+$. 
As $W^u(O)\cap W^{s,+}(\sigma)$ is dense in $W^{u}(O)$ we have $I^+\cap W^{s,+}(\sigma)$ is dense in 
$I^+$.
 
It is possible to assume $I^+$ is contained in that component of $R_z^0\setminus \mathcal{F}^s(z,R^0_z)$. It is because of
the positive orbit of $q$ carries the positive orbit of $I^+$ into such a component. Furthermore, the stable
manifolds throught $I^+$ form a subrectangle $R_I^+$ in there. So, $W^{s,+}(\sigma)\cap R_I^+$ is dense in $R_I^+$.

Now, as in  {\em Theorem 17} of \cite{lec},  we suppose $\omega_X(q)\cap Int(R_I^+)\neq\emptyset$ to obtain a 
contradiction. By hypothesis, the omega-limit set of $q$ is not a periodic orbit. Then {\em Lemma 5.6} in \cite{mpa1} 
implies that the positive orbit of $q$ cannot intersects $\mathcal{F}^s(q,R^0_z)$ infinitely many times. Now, if it
intersects $R_I^+$, then by the density of $W^{s,+}(\sigma)\cap R_I^+$ in $R_I^+$, we can assert that the positive orbit 
of a point $p$ in $W^{s,-}(\sigma)$ would intersect $R_I^+$. Therefore $p\in Cl(W^u(O))\subset M(X)$ which we get a 
contradiction. So $\omega_X(q)\cap Int(R_I^+)=\emptyset$.
 
To continue, we choose a point $z'\in Int(R_I^+)$ and a point $z''$ in the connected component $R_z^0\setminus
\mathcal{F}^s (z, R_z^0 )$ not intersected by the 
positive orbit of $q$.	The desired rectangle $\Sigma_z$ is a subrectangle of $R^0_z$ bounded by $\mathcal{F}^s(z',R_z^0)$
and $\mathcal{F}^s(z'',R_z^0)$.\\

\item  $W^s(\sigma)\cap W^u(O)\subset W^{s,+}(\sigma)$ and $W^s(\sigma)\cap W^u(O')\subset W^{s,-}(\sigma)$
for some hyperbolic periodic orbit $O'\neq O$.

In this way, we have the hypotheses of {\em Theorem 17} in \cite{lec}. Therefore
there exists an interval $I^-\subset W^u(O')$ contained in that component of $R_z^0\setminus \mathcal{F}^s(z,R^0_z)$,
such that $q\in\partial I^-$ and $I^-\cap W^{s,-}(\sigma)$ is dense in $I^-$. The
stable manifolds throught $I=I^+\cup\{q\}\cup I^-$ form a subrectangle $R_I$ in there, with $Int(R_I)\cap\omega_X(q)=
\emptyset$. So, the existence of $\Sigma_z$ is guaranteed such as last item. \\

\item $W^{s,+}(\sigma)\cap W^u(O)\neq\emptyset$ and $W^{s,-}(\sigma)\cap W^u(O)\neq\emptyset$.

We assert that there are $O_1,O_2$ hyperbolic periodic orbits such that, 
$W^s(\sigma)\cap W^u(O_1)\subset W^{s,+}(\sigma)$ and $W^s(\sigma)\cap W^u(O_2)\subset W^{s,-}(\sigma)$. Indeed, we take 
$q_1\in W^{s,+}(\sigma)\cap W^u(O)$ and $q_2\in W^{s,-}(\sigma)\cap W^u(O)$.

As $M(X)$ is union of homoclinic classes and $W^u(O)\subset M(X)$, there are hyperbolic periodic orbits $O_1,O_2$ satisfying
$q,q_1\in H(O_1)$ and $q,q_2\in H(O_2)$. Therefore $O_X(q_1)\subset H(O_1)$ and $O_X(q_2)\subset H(O_2)$. Moreover, since 
the homoclinic classes are closed set we have that $\sigma$ and $O$ are in $H(O_1)\cap H(O_2)$. From {\em Remark} \ref{rk3}
follows $H(O_1)\cap W^s(\sigma)\subset W^{s,+}(\sigma)$ and $H(O_2)\cap W^s(\sigma)\subset W^{s,-}(\sigma)$. 
On the other hand, let $W^+(O)$ be the connected component of $W^u(O)\setminus O$ containing $q_1$, then $W^+(O)
\subset H(O_1)$. Analogously, for $W^-(O)$, the connected component of $W^u(O)\setminus O$ containing $q_2$, we have
$W^-(O)\subset H(O_2)$. Therefore $W^u(O_1)\cap W^s(\sigma)\subset W^{s,+}(\sigma)$ and $W^u(O_2)\cap W^s(\sigma)
\subset W^{s,-}(\sigma)$. Again we have the hypotheses of {\em Theorem 17} in \cite{lec}.\\

\item $W^{s,+}(\sigma)\cap W^u(O)\neq\emptyset$ and $W^{s,-}(\sigma)\cap H(O)\neq\emptyset$.

It is not possible by {\em Corollary} \ref{clly5}.

\item $W^{s,+}(\sigma)\cap W^u(O)\neq\emptyset$, $W^{s,-}(\sigma)\cap Cl(W^u(O'))\neq\emptyset$ and $q\in Cl(W^u(O'))$, 
where $O'$ is a hyperbolic periodic orbit of $X$.

From last item $O'\notin H(O)$. As $X$ satisfies the Property $(P)$\index{Property $(P)$}, there is $\sigma'\in Sing(X)$
such that $W^u(O')\cap W^s(\sigma')\neq\emptyset$. If 
$\sigma'=\sigma$ then $W^u(O')$ intersects $W^{s,+}(\sigma)$ or $W^{s,-}(\sigma)$. Observe that those alternatives were 
already analyzed. If $\sigma'\neq\sigma$, then we can obtain an interval $J^-$ such that $J^-\subset W^u(O')$ and 
$J^-\cap W^s(\sigma')$ is dense in $J^-$. Moreover we can assume $W^{s}(\sigma)\cap W^u(O)\subset W^{s,+}(\sigma)$
to obtain an interval $I^+$ such that $I^+\subset W^u(O)$ and 
$I^+\cap W^{s,+}(\sigma)$ is dense in $I^+$. Since $O'\notin H(O)$, follows that $W^u(O')\nsubseteq H(O)$. 
Therefore $W^u(O')$ cannot intersect $W^{s,+}(\sigma)$. In this way, there is an open arc $I^-\subset 
\bigcup_{t\geq 0}X_t(J^-)$ such that $q\in\partial I^-$. $I^-$ works such as in second item. The stable manifolds 
\index{Stable manifold} throught
$I=I^+\cup\{q\}\cup I^-$ generates a subrectangle $R_I$. This acts such as {\em Theorem} 17 in \cite{lec}. 

\end{itemize}

Now assume the positive orbit intersects both components of $R_z^0\setminus\mathcal{F}^s(z,R_z^0)$. Therefore we take
$I$ (or $I^+$ to first case) with the positive orbit as before to obtain two subrectangles $R_I^t$ and $R_I^b$,
like $R_I$ (or $R_I^+$ to first case), in each component. Then we select two points $z'\in Int(R_I^t )$ and
$z''\in Int(R_I^b)$ and define $\Sigma_z$ as the rectangle in $R_z^0$ bounded by $\mathcal{F}^s(z', R_z^0)$ and
$\mathcal{F}^s(z'', R_z^0)$. 

From {\em Proposition} 3 in \cite{lec} we conclude the result.

\end{proof}

We remember the concept of {\em singular cross section} \index{Singular cross section} that appears in \cite{mp2}.
For a disjoint collection of rectangles $\mathcal{S}=\{S_1,\cdots,S_l\}$ we denote 
$\mathcal{S}^o=\mathcal{S}\setminus\partial\mathcal{S}$. and $\partial^{\ast}\mathcal{S}=\bigcup_{S\in\mathcal{S}}
\partial^{\ast} S$ for $\ast=h,v,o$.

\begin{defi}
A singular cross section of $X$ is a finite disjoint collection $\mathcal{S}$ of
foliated rectangles with $M(X)\cap\partial ^h S =\emptyset$ such that for every $S\in\mathcal{S}$ there is a
leaf $l_S$ of $\mathcal{F}^s$ in $S^o$ such that the return time $t_S(x)$ for $x\in S\cap Dom(\Pi_S )$ goes
uniformly to infinity as $x$ approaches $l_S$. 

We define the singular curve \index{Singular cruve} of $\mathcal{S}$ as the union,

$$l_{\mathcal{S}}=\bigcup_{S\in\mathcal{S}}l_S.$$ 

\end{defi}

\begin{prop}
\label{prop31}
Let $q$ be a regular point \index{Regular point} in $W^u(\sigma)$, with $\sigma$ a singularity of a Venice mask $X$, 
and let $O$ be a hyperbolic periodic orbit such that $W^u(O)\cap W^s(\sigma)\neq\emptyset$. Then $\omega_X(q)$
\index{Omega-limit set} is a closed orbit\index{Closed orbit}. 
\end{prop}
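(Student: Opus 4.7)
The strategy is to invoke Theorem \ref{thPsigma}. Since $q\in W^u(\sigma)\subset M(X)$, the set $\omega_X(q)$ is sectional-hyperbolic, so the theorem reduces the proposition to exhibiting a closed subset $\Sigma\subset M$ for which $q$ satisfies Property $(P)_\Sigma$. Once that is done, Theorem \ref{thPsigma} immediately delivers that $\omega_X(q)$ is a closed orbit.

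To build the open arc $I$ with $q\in\partial I$ demanded by $(P)_\Sigma$, I would recycle the construction already carried out in the proof of Lemma \ref{lem6}. Starting from a point of $W^u(O)\cap W^s(\sigma)$ and applying the inclination lemma, one gets $W^u(\sigma)\subset \cl(W^u(O))$, and therefore an open arc $I\subset W^u(O)$ inside a small cross section through $q$ with $q\in\partial I$, exactly as the arcs $I^+$ (or $I=I^+\cup\{q\}\cup I^-$) produced in the case analysis of Lemma \ref{lem6}. The invariance of $W^s(\sigma)$ and the density of the transverse homoclinic points of $O$ in $W^u(O)$ give that $I\cap W^s(\sigma)$ is dense in $I$. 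For $\Sigma$, I would take a closed sub-rectangle of a small foliated cross section $R$ near $\sigma$, bounded by two stable leaves inside $R$, where $R$ is a piece of a singular cross section of $\omega_X(q)$ provided by Lemma \ref{lem6}. That lemma permits us to shrink $R$ (partitions of arbitrarily small size) so that $\omega_X(q)\cap\Sigma=\emptyset$; and because $O^+(q)\subset W^u(\sigma)$ exits the neighborhood of $\sigma$ along the unstable direction while $\Sigma$ sits on the stable side of $\sigma$, non-recurrence of $q$ (which is part of the standing hypothesis of Section \ref{omegalimit}) gives $O^+(q)\cap\Sigma=\emptyset$. Hence $\cl(O^+(q))\cap\Sigma=\emptyset$.

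It remains to verify $O^+(x)\cap\Sigma\neq\emptyset$ for every $x\in I$. For $x\in I\cap W^s(\sigma)$ this is immediate, since $O^+(x)$ converges to $\sigma$ and thus crosses any stable-transverse section sitting near $\sigma$. For the remaining $x\in I$, the positive orbit stays close enough to the orbits of points in $I\cap W^s(\sigma)$ on its way past $\sigma$ that the local product structure (a neighborhood of $\sigma$ foliated by strong stable leaves inside the sectional-hyperbolic splitting) forces it to meet the full 2-dimensional sub-rectangle $\Sigma$. With Property $(P)_\Sigma$ in hand, Theorem \ref{thPsigma} yields the conclusion. The main obstacle in this plan is precisely the \emph{every} $x\in I$ clause: the density of $I\cap W^s(\sigma)$ in $I$ only guarantees hits for a dense subset, and upgrading this to all of $I$ is what dictates choosing $\Sigma$ as a subrectangle bounded by two stable leaves (as in Lemma \ref{lem6}) rather than a single singular leaf $l_{\mathcal{S}}$; without this transversal thickness, a point of $I\setminus W^s(\sigma)$ arbitrarily close to the singular leaf could slip past $\Sigma$.
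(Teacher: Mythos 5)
Your high-level frame (reduce to exhibiting a closed set $\Sigma$ with Property $(P)_\Sigma$ and then quote Theorem \ref{thPsigma}) is indeed the paper's frame, but the way you choose $\Sigma$ breaks the argument. You place $\Sigma$ on ``the stable side of $\sigma$'' itself and claim $O^+(q)\cap\Sigma=\emptyset$ because the orbit of $q$ exits along the unstable direction. That only controls the first exit. Nothing in the hypotheses prevents $\sigma\in\omega_X(q)$ (non-recurrence of $q$ only says $q\notin\omega_X(q)$), and if $O^+(q)$ returns to a small neighborhood of $\sigma$ it must come in near $W^s(\sigma)$, hence it \emph{will} cross any singular cross section positioned to catch orbits approaching $\sigma$ --- which is exactly the property you need $\Sigma$ to have in order to verify $O^+(x)\cap\Sigma\neq\emptyset$ for $x\in I$. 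So the two requirements of $(P)_\Sigma$ pull against each other for your choice of $\Sigma$. The paper's proof spends most of its effort precisely on escaping this: it uses Lemma \ref{lem2} (Property $(P)$) to produce a sequence $q_n\to q$ with $\omega_X(q_n)=\{\sigma'\}$ for a fixed singularity $\sigma'$, then rules out $\sigma'\in\omega_X(q)$ by a separate argument (via the $C^1$ regularity of the return map on $B^+_\delta(\hat q_n)$ from \cite{lec}, contradicted by the accumulation of $W^s(\sigma')$ on $q$), and only then takes $\Sigma=\Sigma^+$ to be a singular cross section inside a linearizing neighborhood $V_{\sigma'}$ with $O^+(q)\cap V_{\sigma'}=\emptyset$. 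That auxiliary singularity $\sigma'$, disjoint from $\cl(O^+(q))$, is the idea your proposal is missing.

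The second gap is the one you yourself flag as ``the main obstacle'': showing $O^+(x)\cap\Sigma\neq\emptyset$ for \emph{every} $x\in I$, not just for the dense subset $I\cap W^s(\sigma)$. Choosing $\Sigma$ as a subrectangle bounded by two stable leaves does not resolve this: the times at which the orbits of the approximating points reach the neighborhood of the singularity tend to infinity, so continuity of the flow gives no control over the orbit of a limit point, and ``local product structure'' is not an argument here. The paper handles this by working with the approximating arcs $I_n$ (whose boundary points $q_n$ lie in $W^s(\sigma')$, so that $O^+(s)\cap\Sigma^+\neq\emptyset$ for all $s\in I_n$) and then invoking the holonomy results Lemma 19 and Theorem 22 of \cite{lec} to pass to the limit and conclude $x\in Dom(\Pi_{S,\Sigma^+})$. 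Without some substitute for those results, your verification of the second half of $(P)_\Sigma$ is an assertion, not a proof.
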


\begin{proof}
If $\omega_X(q)$ is a singularity, then it is done. Hereafter, we assume that $\omega_X(q)$ is not a singularity.
From {\em Lemma \ref{lem6}} follows that $\omega_X(q)$ has singular partitions of arbitrarily small size. 
On the other hand, let $T_U M=\hat{F}_U^s\oplus \hat{F}_U^c$ be a continous extension of the sectional-hyperbolic
splitting \index{Sectional-hyperbolic splitting}
$T_{\omega_X(q)}M=F_{\omega_X(q)}^s\oplus F_{\omega_X(q)}^c$ of $\omega_X(q)$ to a neighborhood $U$ of 
$\omega_X(q)$. Let $I$ be an arc tangent to $\hat{F}_U^c$, transverse to $X$, with $q$ as boundary point.
{\em Theorem 18} in \cite{lec} guarantees for every singular partition \index{Singular partition} 
$\mathcal{R}=\{S_1,\cdots S_k\}$
of $\omega_X(q)$, the existence of $S\in\mathcal{R}$, $\delta>0$, a sequence $q'_1,q'_2,\cdots\in S$ in the positive orbit 
of $q$, and a sequence of intervals $J_1',J_2'\cdots\subset S$ in the positive orbit of $I$ with $q_j'$ as a boundary point 
of $J_j'$ for all such that $length (J_j')\geq\delta$, for all $j=1,2,3,\cdots$.\\

We can assume $I=J_1'$. As $q, q'_j\in M(X)$ and $X$ is
a Venice mask, we can use the {\em Lemma \ref{lem2}} to obtain a sequence $\{q_n:n\in\nat\}\subset M$ such that 
$q_n\to q$ and $\omega(q_n)$ is a singularity for any $n$. As $X$ has just a finite singular points, we can take 
$\omega(q_n)=\{\sigma'\}$ for all $n$, and some $\sigma'\in Sing(X)$. If $q_n\in W^u(\sigma)$ for all $n$,
then $\omega(q)=\{\sigma'\}$ which contradicts our assumption. Therefore $q_n\notin W^ u(\sigma)$ for any $n$. We
can take $q_n$ such that $q_n\in S$ for all $n$\\

On the other hand, for $\sigma'$ are possible the following two alternatives, either $\sigma'\in\omega_X(q)$, or 
$\sigma'\notin\omega_X(q)$. We begin to consider $\sigma'\in\omega_X(q)$. {\em Lemma 14} in \cite{lec} asserts 
$O^+(q)\cap\mathcal{R}=\{\hat{q}_1,\hat{q}_2,\cdots\}$ an infinite sequence ordered in a way that $\Pi(\hat{q}_n)=
\hat{q}_{n+1}$, and the existence of a curve $c_n\subset W^s(Sing(X)\cap \omega_X(q))\cap B_{\delta}(\hat{q}_n)$ 
such that

$$B_{\delta}^+(\hat{q}_n)\subset Dom(\Pi)\qquad and\qquad \Pi |_{B_{\delta}^+(\hat{q}_n)}\quad is \quad C^1, $$

where $B_{\delta}^+(\hat{q}_n)$ denotes the connected component \index{Connected component}
of $B_{\delta}(\hat{q}_n)\setminus c_n$ containing $\hat{q}_n$.

In particular, we can reduce $\delta$ to obtain $\Pi_S=\Pi|_S$  such that

$$(\Pi_S)|_{B_{\delta}^+(q)}\quad\textit{is $C^1$}.$$

However $W^s(\sigma')$ accumulates $q$ on $S$, so we obtain a contradiction.

Therefore the first alternative cannot occur. We conclude $\sigma'\notin \omega_X(q)$.\\

Hartman-Grobman's Theorem \index{Hartman-Grobman's Theorem} implies the existence of a
neighborhood $V_{\sigma'}$ of $\sigma'$, where the flow is $C^0$-conjugated to its linear part. Let $\eta>0$ be such that
$V_{\sigma'}\subset B_{\eta}(\sigma')$ and $O^+(q)\cap V_{\sigma'}=\emptyset$.
From {\em Lemma 2.2} in \cite{mp2} there are singular cross sections $\Sigma^+,\Sigma^-\subset V_{\sigma'}$ such that 
every orbit of $M(X)$ passing close to some point in $W^{s,+}(\sigma')$ (respectively $W^{s,-}(\sigma')$)
intersects $\Sigma^+$(respectively $\Sigma^-$). Moreover {\em Lemma 2.3} in \cite{apu} guarantees the existence of two
disks $\Lambda^+, \Lambda^-\subset V_{\sigma'}$ transverse to $X$ such that for $B_{\varepsilon}(\sigma')\subset V_{\sigma'}$,
and for any point $x\in B_{\varepsilon}(\sigma')$, there are two numbers $t_- < 0 < t_+$
with $X_{t_-}(x) \in \Sigma^+\cup\Sigma^-$ and $X_{t_+}(x)\in \Lambda^+\cup\Lambda^-$. In addition, 
$X_t(x)\in V_{\sigma'}$ for all $t\in(t_-, t_+)$. See Figure \ref{singcross}.\\

\begin{figure}
\begin{center}
\input{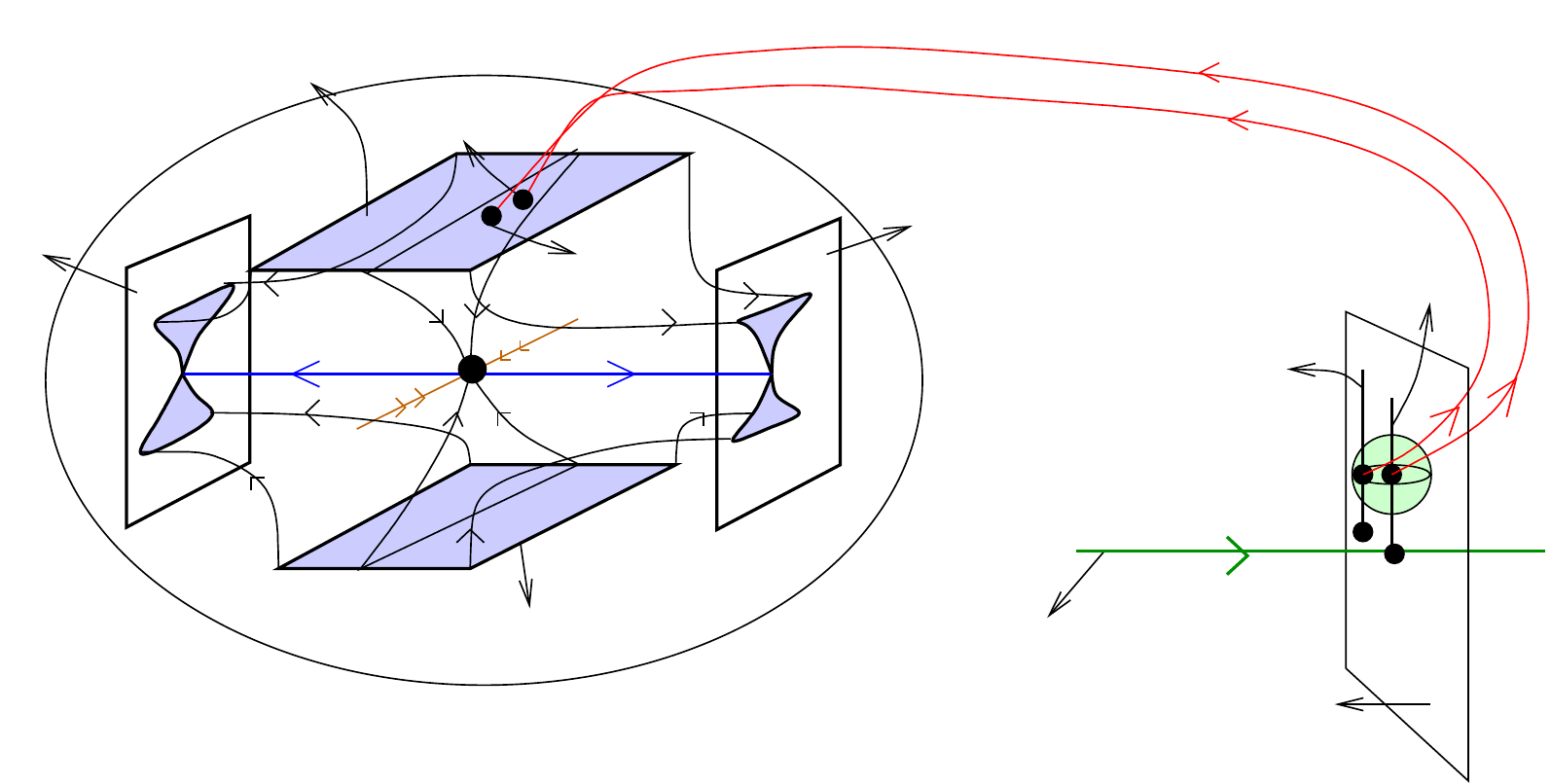_t}
\caption{\label{singcross} Proof {\em Proposition \ref{prop31}.}}
\end{center}
\end{figure}

As $q_n\to q$, we can take a sequence of open arcs $I_1,I_2,\cdots$ with $q_n$ as a boundary point 
of $I_n$ such that $Cl(I_n)$ converges to $Cl(I)$. In particular, we can assume $\delta\leq length (I_n)<\epsilon$
for all $n=1,2,3,\cdots$ and $diam(S)=\epsilon$. In addition, we can take $I_n\subset S$ for all $n$.
On the other hand, $q_n\in W^s(\sigma')$ implies that $O^+(q_n)$ intersects $\Sigma^+\cup\Sigma^-$. Assume that the 
intersection occurs in $\Sigma^+$ for all $n$. As we can choose the singular partition of arbitrarily small size and 
$q$ is non-recurrent, there is $\varepsilon'>0$ such that $diam(\mathcal{R})=\varepsilon'$ and 
$O^+(s_n)\cap \Sigma^+\neq\emptyset$ for all $s_n\in I_n$. \\

We assert that $q$ satisfies the property $(P)_{\Sigma}$, where $\Sigma=\Sigma^+$. Indeed, from $O^+(q)\cap V_{\sigma'}
=\emptyset$ follows $O^+(q)\cap \Sigma^+=\emptyset$. Now, for $x\in I$ there are $\beta_1, \beta_2>0$
such that $B_{\beta_1}(x)\cap \partial I=\emptyset$, $B_{\beta_2}(x)\cap \{q_l\}=\emptyset$ and
$B_{\beta_2}(x)\cap I_l\neq\emptyset$ $l$ for all $l$ large. We define $\beta=\min\{\beta_1,\beta_2\}$.
Let $\{x_l\}_l$ be a sequence with $x_l\in I_l\cap B_{\beta}(x)$ such that $x_l\to x$. As in \cite{lec}, we define
the {\em holonomy map}\index{Holonomy map} $\Pi_{S,\Sigma^+}$ from $S$ to $\Sigma^+$ by 

$$
Dom(\Pi_{S,\Sigma^+})=\{y\in S: X_t(y)\in\Sigma^+\textit{ for some $t>0$}\}
$$

and

$$
\Pi_{S,\Sigma^+}(y)=X_{t_{S,\Sigma^+}(y)}(y),
$$

where $t_{S,\Sigma^+}(y)=\inf\{t>0: X_t(y)\in\Sigma^+\}$.\\ 	

Therefore $x_l\in Dom(\Pi_{S,\Sigma^+})$ for all $n$. From {\em Lemma 19} and {\em Theorem 22} in \cite{lec} follows that
$x\in Dom(\Pi_{S,\Sigma^+})$.\\

Finally, {\em Theorem \ref{thPsigma}} implies that $\omega_X(q)$ is a closed orbit. As we assume $\omega_X(q)$ not being a
singularity, then we conclude that the omega-limit set of $q$ is a periodic orbit.

\end{proof} 

\subsection{Property $(P_{\sigma'})_q^+$}

\begin{defi}
Let $\sigma,\sigma'\in Sing(X)$ and $q$ be a regular point in $W^u(\sigma)$. We say that an open arc $I\subset M$ satisfies 
the Property $(P_{\sigma'})_q^+$ if $q\in \partial I$ and $I\cap W^{s,+}(\sigma')$ is dense in $I$. In a similar way, 
an open arc $J\subset M$ satisfies the Property $(P_{\sigma'})_q^-$ if $q\in \partial J$ and $J\cap W^{s,-}(\sigma')$
is dense in $J$. \index{Property $(P_{\sigma'})_q^+$}
\end{defi}

\begin{prop}
\label{prop41}
Let $O$ be a hyperbolic periodic orbit of a Venice mask $X$. Assume $\sigma'\in Sing(X)$ 
satisfying $\emptyset \neq W^u(O)\cap W^s(\sigma')\subset W^{s,+}(\sigma')$. Let $q$ be
a regular point with $q\in W^u(\sigma)\cap Cl(W^u(O))$, for some $\sigma\in Sing(X)$. Then there is 
an open arc satisfying the Property $(P_{\sigma'})_q^+$. The same interchanging $+$ by $-$.
\end{prop}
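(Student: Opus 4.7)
We adapt the construction of the first case in the proof of Lemma \ref{lem6}, with $\sigma'$ in place of $\sigma$ there. The goal is to exhibit an open arc $I$ in a cross section through $q$, contained in $W^u(O)$ with $q\in \partial I$, and such that $I\cap W^{s,+}(\sigma')$ is dense in $I$.

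\emph{Constructing the arc.} Since $q\in W^u(\sigma)\setminus\{\sigma\}$ and $\sigma$ is Lorenz-like, write $q = X_T(x^*)$ with $T>0$ and $x^*\in W^u_{loc}(\sigma)$ lying in a linearizing neighborhood $U$ of $\sigma$. Choose a small cross section $\tilde\Sigma\subset U$ transverse to $X$ at $x^*$, and let $\Sigma = X_T(\tilde\Sigma)$ be the corresponding cross section at $q$, with $q\in Int(\Sigma)$. Because $q\in Cl(W^u(O))$ and $Cl(W^u(O))$ is flow-invariant, $x^*\in Cl(W^u(O))$, so the two-dimensional immersed submanifold $W^u(O)$ accumulates $x^*$ through $\tilde\Sigma$. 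Using the linear model of the flow in $U$, the arcs of $W^u(O)\cap\tilde\Sigma$ approach $x^*$ along the $W^{cu}$-direction of $\sigma$; one selects a single such arc $\tilde I\subset W^u(O)\cap\tilde\Sigma$ with $x^*\in\partial\tilde I$ on the prescribed side of $W^u_{loc}(\sigma)$ (the one corresponding to $W^{s,+}$). Then $I := X_T(\tilde I)\subset W^u(O)\cap\Sigma$ is an open arc with $q\in\partial I$.

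\emph{Density on $I$.} It suffices to prove that $W^u(O)\cap W^{s,+}(\sigma')$ is dense in $W^u(O)$, since $I\subset W^u(O)$. Combining the hypothesis $\emptyset\neq W^u(O)\cap W^s(\sigma')\subset W^{s,+}(\sigma')$, the density of $W^s(Sing(X))\cap M(X)$ in $M(X)$ (Remark following Lemma \ref{l3}), the Property $(P)$ granted by Lemma \ref{l3}, and the sector restriction of Remark \ref{rk3} (which forbids $H(O)$ from simultaneously meeting the complementary sectors at $\sigma'$) yields the density, exactly as in the first case of the proof of Lemma \ref{lem6}. Hence $I\cap W^{s,+}(\sigma')$ is dense in $I$, establishing Property $(P_{\sigma'})_q^+$. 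The arc satisfying Property $(P_{\sigma'})_q^-$ is produced by the symmetric construction, interchanging the roles of $W^{s,+}$ and $W^{s,-}$.

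\emph{Main obstacle.} The delicate step is the density assertion. Density of $\bigcup_{\sigma''\in Sing(X)} W^s(\sigma'')\cap M(X)$ in $M(X)$ must be localized to density of $W^{s,+}(\sigma')$ alone on $W^u(O)$, ruling out interference from stable manifolds of other singularities and from $W^{s,-}(\sigma')$. This rests on the homoclinic class decomposition of $M(X)$ together with the sector dichotomy in Remark \ref{rk3}, which precisely prevents these competing intersections from being generic on $W^u(O)$.
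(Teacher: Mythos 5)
There is a genuine gap in your construction of the arc. You build $I$ directly at $q$: from $q\in Cl(W^u(O))$ you pass to $x^*\in W^u_{loc}(\sigma)\cap Cl(W^u(O))$ and then ``select a single arc $\tilde I\subset W^u(O)\cap\tilde\Sigma$ with $x^*\in\partial\tilde I$.'' But $q\in Cl(W^u(O))$ only says that $W^u(O)$ \emph{accumulates} on $x^*$; in a cross section, $W^u(O)\cap\tilde\Sigma$ is a priori a countable family of disjoint arcs converging to $x^*$ in the Hausdorff sense, and no single one of them need have $x^*$ in its boundary. The standard way to produce one arc with $q$ as a boundary point is the linearization argument at $\sigma$: take a curve of $W^u(O)$ crossing $W^{s,\pm}(\sigma)$ \emph{transversally} and push it forward, so that its image stretches along $W^u_{loc}(\sigma)$ and limits onto $q$. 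That requires $W^u(O)\cap W^s(\sigma)\neq\emptyset$ (on the right side), which is exactly what the hypothesis does \emph{not} give you: it only gives $W^u(O)\cap W^{s,+}(\sigma')\neq\emptyset$ at the possibly different singularity $\sigma'$. This is why the paper does not argue at $q$ directly. It first builds the arc at points $p\in W^u(\sigma')$, where the transversal crossing is available by hypothesis, and then propagates it: using Proposition \ref{prop31} the forward orbit of each branch of $W^u(\sigma')$ limits on a closed orbit, the Inclination Lemma produces a transversal crossing of $\bigcup_{t\geq 0}X_t(J)\subset W^u(O)$ with $W^s(\sigma_1)$ for a new singularity $\sigma_1$, and the arc (together with the density of $W^{s,+}(\sigma')$ on it) is transported to a point $q_1\in W^u(\sigma_1)$. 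Iterating through finitely many singularities the chain eventually reaches $\sigma$, which is how the arc at $q$ is finally obtained. Your proposal omits this entire mechanism, and without it the first step of your construction does not go through.

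A secondary remark: your density step rests on ``$W^u(O)\cap W^{s,+}(\sigma')$ is dense in $W^u(O)$,'' which is an assertion the paper itself also makes and uses, so I do not count it against you; but note that Remark \ref{rk3} (no simultaneous intersection of $H(O)$ with opposite sectors) does not by itself yield that density, and in the paper the density is only ever applied to arcs that have already been shown to sit inside $W^u(O)$ via the transversal-crossing construction above.
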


\begin{proof}
Let $p\in W^u(\sigma')$ be a regular point. We assert that there is an open interval $J$ satisfying the Property 
$(P_{\sigma'})_p^+$. Indeed, $\sigma'$ and $p$ are contained in $Cl(W^u(O))$. As $W^u(O)$ intersects $W^{s,+}(\sigma')$, 
then $W^u(O)\cap W^{s}(\sigma)$ is dense in $W^{s,+}(\sigma')$. Consider an open arc $J\subset W^u(O)$ with 
$p\in \partial J$. So, the density of $W^u(O)\cap W^{s,+}(\sigma)$ in $W^{u}(O)$ implies that $J\cap W^{s,+}(\sigma')$ 
is dense in $J$. \\

If $\sigma=\sigma'$, then we obtain the desired result. 
Now, we consider $\sigma\neq\sigma'$. From {\em Lemma \ref{prop31}} follows that the omega-limit set of every point
in $W^u(\sigma')$ is a closed orbit. Now, take two point $p_1,p_2$, one on each branch of $W^u(\sigma')\setminus\{\sigma'\}$. 
We analize the following cases which are ilustrated in Figure \ref{omegacases}.

\begin{figure}
\begin{center}
\input{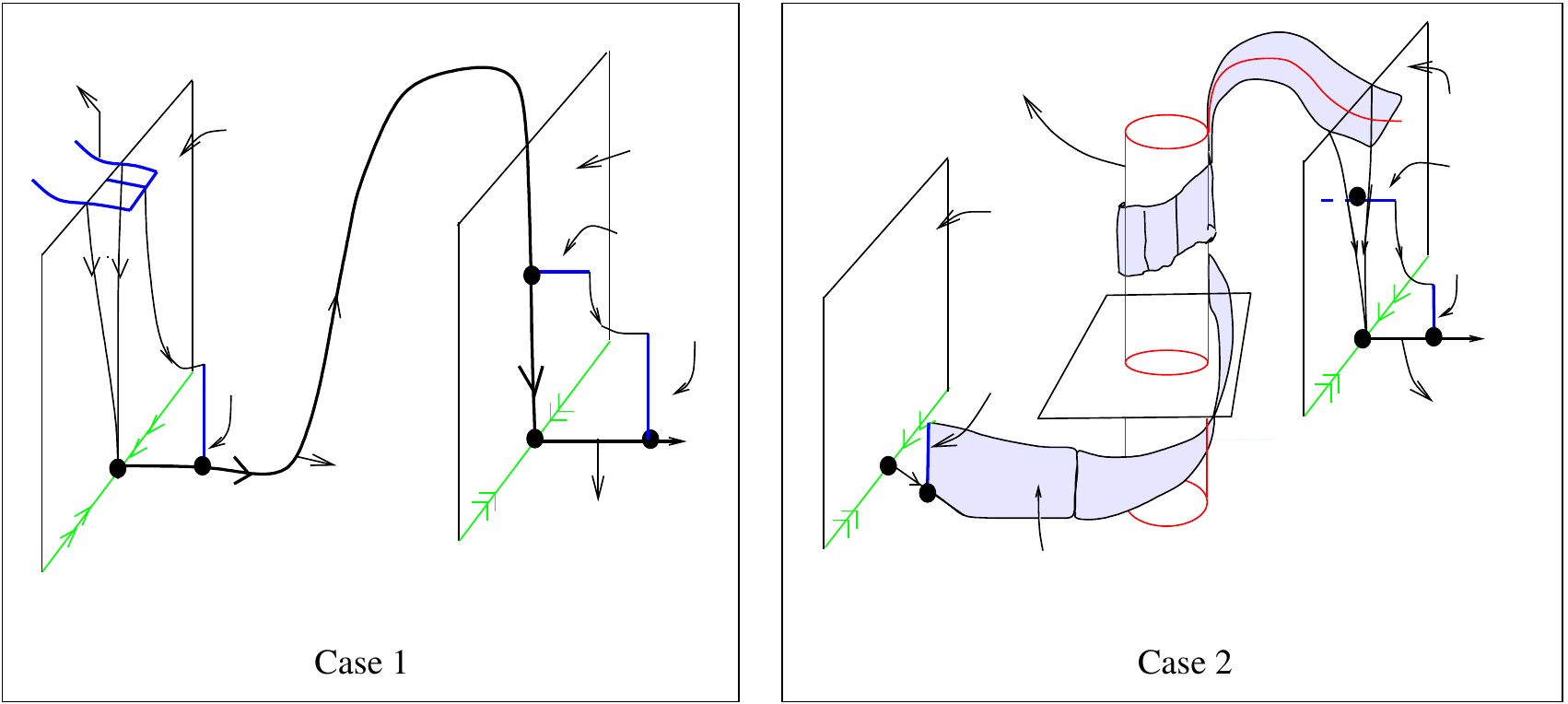_t}
\caption{\label{omegacases} Proof Proposition \ref{prop41}}
\end{center}
\end{figure}

\begin{itemize}

 \item $\omega_X(p_1)$ is a singularity. Let $\sigma_1$ be a singularity with $\omega_X(p_1)=\{\sigma_1\}$. 
If $\omega_X(p_1)=\{\sigma'\}$, then $\omega_X(p_2)\neq\{\sigma'\}$. Indeed, $\omega_X(p_1)=\{\sigma'\}=\omega_X(p_2)$
implies either $W^u(O)\cap W^s(\sigma)\neq\emptyset$ or $Cl(W^u(O))\cap W^s(\sigma)\neq\emptyset$. But
$W^u(O)\cap W^s(\sigma)=\emptyset$ by hypothesis. Moreover $\sigma\in Cl(W^u(O))$. So, $\sigma_1\neq\sigma'$.

Let  $w\in W^u(\sigma')\cap W^s(\sigma_1)$ be a point in $O_X^+(p_1)$ close to $\sigma_1$. Using it and linear
coordinates around $\sigma_1$, we can construct an open interval $J_1\subset\bigcup_{t\geq 0}X_t(J)\subset W^u(O)$
contained in a suitable cross section throught $w$, such that $w\in\partial J_1$. From {\em Inclination lemma} \cite{dmp},
follows that $W^u(O)$ accumulates points in some branch of $W^u(\sigma_1)$. Therefore, for 
$q_1\in (W^u(\sigma_1)\cap Cl(W^u(O)))\setminus\{\sigma_1\}$ there is an open arc $I_1$ such that
$I_1\subset\bigcup_{t\geq 0}X_t(J_1)$ and $q_1\in\partial I_1$. The density of $W^{s,+}(\sigma')\cap 
W^u(O)$ in $W^u(O)$ implies the density of $W^{s,+}(\sigma')\cap I_1$ in $I_1$. Then $I_1$ satisfies 
$(P_{\sigma'})_{q_1}^+$.

 \item When the omega-limit set of $p_1$ and $p_2$ are respectively hyperbolic periodic orbits $O_1,O_2$, we have that 
$W^u(O_i)$ intersects the stable manifold of some singularity $\sigma_i$ of $X$, $i=1,2$. We first assume 
$\sigma_1=\sigma_2=\sigma'$. That intersection cannot just only occurs in $W^s(\sigma')$ because of this would imply
$\sigma\notin Cl(W^u(O_1)\cup W^u(O_2))$ and $Cl(W^u(O))\subset Cl(W^u(O_1)\cup W^u(O_2))$. But $\sigma\in Cl(W^u(O))$
which produces a contradiction. Therefore we can assume that $W^u(O_1)\cap W^s(\sigma_1)\neq\emptyset$ with
$\sigma_1\neq\sigma'$.

Applying {\em Inclination lemma}\index{Inclination lemma}, $Cl(W^u(O))$ and $\bigcup_{t\geq 0}X_t(J)$ intersect
$W^s(\sigma_1)$ transversally. Again,
let $w\in W^u(O)\cap W^s(\sigma)$ be a point in $\bigcup_{t\geq 0}X_t(J)$ close to $\sigma_1$. Using it and linear
coordinates around $\sigma_1$, we can construct an open interval $J_1\subset W^u(O)$ contained in a suitable cross
section throught $w$. $J_1\setminus\{w\}$ is formed by two open arcs $J_1^+,J_1^-\subset  W^u(O)$. Therefore, for 
$q_1\in W^u(\sigma_1)\setminus\{\sigma_1\}$ there is an open arc $I_1$ such that and $q_1\in\partial I_1$ and,  
$I_1\subset\bigcup_{t\geq 0}X_t(J^+)$, or $I_1\subset\bigcup_{t\geq 0}X_t(J^-)$. The density of $W^{s,+}(\sigma')\cap 
W^u(O)$ in $W^{s,+}(\sigma')$ implies the density of $W^{s,+}(\sigma)\cap I_1$ in $I_1$. Then $I_1$ satisfies 
$(P_{\sigma'})_{q_1}^+$. 

\end{itemize}

If $\sigma_1=\sigma$, then the result is obtained. Otherwise, we apply a similar process to $\sigma_1$ to get 
$\sigma_3\in Sing(X)$ with $\sigma_3\notin\{\sigma',\sigma_1\}$, and an open arc $I_3\subset Cl(W^u(O))$ such that
$I_3$ satisfies the Property $(P_{\sigma'})_{q_3}^+$.

As $\sigma\in Cl(W^u(O))$ and $X$ just has finitely many singularities, we conclude the existence of some open arc
satisfying the Property $(P_{\sigma'})_{q}^+$ for $q\in W^u(\sigma)\cap Cl(W^u(O))$.

\end{proof}


\subsection{Proof of Theorem \ref{thH}}

It is sufficient to prove the existence of singular partitions\index{Singular partition} of arbitrarily small size. 

Let $q$ be a regular point in $W^u(\sigma)$, where $\sigma\in Sing(X)$.\\

As $M(X)$ is union of homoclinic classes, there is a hyperbolic periodic orbit $O$ such that $\sigma$ and $q$ are 
contained in the homoclinic class associated to $O$, denoted by $H(O)$. In addition $H(O)$ intersects 
only one or the two connected components $W^{s,+}(\sigma),W^{s,-}(\sigma)$ of $W^s(\sigma)\setminus\mathcal{F}^{ss}_X
(\sigma)$. We begin to analize the intersection in $W^{s,+}(\sigma)$.
On the other hand, $X$ satisfies the Property $(P)$.
This implies that there is a singularity $\sigma'\in Sing(X)$ with $W^u(O)\cap W^s(\sigma')\neq\emptyset$. 
By {\em Theorem} \ref{th1}, the intersection of $W^u(O)$ with $W^s(\sigma')$ is either only one or the two connected 
components $W^{s,+}(\sigma'),W^{s,-}(\sigma')$ of $W^s(\sigma')\setminus\mathcal{F}^{ss}_X(\sigma')$. If $\sigma=\sigma'$
then from {\em Lemma \ref{lem6}} follows the existence of singular partitions of arbitrarily small size.
Hereafter, we assume $\sigma\neq \sigma'$ and $W^{s.+}(\sigma')\cap W^u(O)\neq\emptyset$.\\

If $Cl(W^u(O))\cap W^{s,-}(\sigma')\neq\emptyset$, then {\em Lemma} \ref{lem41} and {\em Proposition} 
\ref{prop31} imply that for some $p\in W^u(\sigma')\cap Cl(W^u(O))$, $O=\omega_X(p)$ and $H(O)\subset Cl(W^u(\sigma'))$.
But $q\notin W^u(\sigma')$. This contradicts $q\in H(O)$. So, $Cl(W^u(O))\cap W^{s,-}(\sigma')=\emptyset$.
{\em Proposition} \ref{prop41} guarantees the existence of
an open arc $I^+\subset M$ satisfying the Property $(P_{\sigma'})_q^+$. \\

We suppose $\omega_X(q)$ is not a periodic orbit. Let $z$ be a point in $\omega_X(q)$. 
In a similar way as {\em Lemma} \ref{lem6}, we fix a foliated rectangle of small diameter $R_z^0$ such that 
$z\in Int(R_z^0)$ and $\omega_X(q)\cap\partial^h R^0_z =\emptyset$. The positive
orbit of $q$ intersects either only one or the two connected components of $R_z^0\setminus\mathcal{F}^s(z, R_z^0)$.
 
Assume the intersection is occurring in just one component only.

Now, analize the following cases:

\begin{itemize}
 
\item  $q\notin H(O')$ for all hyperbolic periodic orbit $O'$ of $X$ such that $H(O')\cap W^{s,-}(\sigma)\neq\emptyset$.

The existence of the singular partitions of arbitrarily small size is obtained such as the first case
in {\em Lemma} \ref{lem6}.

\item There is a sequence $\{p_n\}_n\subset W^u(O)$ such that $p_n\to p\in W^{s,-}(\sigma)$, and there is a 
sequence $\{q_n\}$ such that $q_n\in O_X(p_n)$ and $q_n\to q$.

From {\em Lemma} \ref{lem41} follows that $\omega_X(q)=O$. But this contradicts our assumption that the omega-limit set
is not a periodic orbit. 

\item For some periodic orbit $O'\neq O$, there is a sequence $\{p_n:n\in\nat\}\subset W^u(O')$ such that
$p_n\to p\in W^{s,-}(\sigma)$, and there is a sequence $\{q_n:n\in\nat\}$ satisfying $q_n\in O_X(p_n)$ and $q_n\to q$.

Again, {\em Lemma} \ref{lem41} implies that $W^u(O')$ does not intersect the open arc $I^+$. From Property $(P)$, there is
$\sigma''\in Sing(X)$ such that $W^u(O')\cap W^s(\sigma'')\neq\emptyset$. Then for some $r\in W^u(\sigma'')$ there is 
an interval $J^-\subset W^u(O')$, such that $r\in\partial J^⁻$ and $J^-\cap W^s(\sigma'')$ is dense in $J^-$. Also there is
an open arc $I^-\subset \bigcup_{t\geq 0}X_t(J^-)$ satisfying $q\in\partial I^-$. Therefore $I^-\subset W^u(O')$ and 
$I^-\cap W^s(\sigma'')$ is dense in $I^-$. In addition, $W^{s,+}(\sigma)\cap I^-=\emptyset$. The stable manifolds
throught $I=I^+\cup\{q\}\cup I^-$ generates a subrectangle $R_I$. This rectangle acts such as {\em Lemma} 17 in \cite{lec}.

\end{itemize}

The existence of the singular partition of arbitrarily small size is obtain such as {\em Lemma} \ref{lem6}.\\

If the intersection of $O^+_X(q)$ with $R^0_z$ occurs in both connected components of $R^0_z\setminus\mathcal{F}^s(z,R^0_z)$,
then we proceed such as {\em Lemma} \ref{lem6} to get a cross section $\Sigma_z$ with $z\in\Sigma_z$ and 
$\partial\Sigma_z\cap\omega_X(q)=\emptyset$.

In this way, {\em Proposition} 3 in \cite{lec} implies the existence of the singular partition of arbitrarily 
small size for $\omega_X(q)$.

Finally, we follow the proof of {\em Proposition} \ref{prop31} to conclude that $\omega_X(q)$ is a closed orbit.


\section{Intersection of homoclinic classes}
\label{inthomcla}

In this section we are interested in the study of the intersection of homoclinic classes in a 
sectional-Anosov flow. We follow some ideas developed in \cite{bamo} to obtain {\em Theorem \ref{thH'}}. More
specifically, we prove that in this context, this intersection can be decomposed in three specific sets.
a non-singular hyperbolic set, finitely many singularities and regular orbits joining them.
Recall that an invariant set
is nontrivial if it does not reduces to a single orbit. The conclusion of {\em Theorem \ref{thH'}} is obvious
when $H_1$ or $H_2$ are trivial invariant sets. Hereafter, $H_1$ and $H_2$ are two non trivial different 
homoclinic classes in $M(X)$. Let $\Lambda$ be the intersection between $H_1$ and $H_2$. 
We start with the following lemma.

\begin{lemma}
\label{lem7}
Assume that there is a singularity $\sigma\in\Lambda$, then for $\delta>0$ small, every sequence 
$\{x_n:n\in\nat\}\subset\Lambda\cap B_{\delta}(\sigma)$ 
such that $x_n\to\sigma$ is contained in $W^s(\sigma)\cup W^u(\sigma)$.
\end{lemma}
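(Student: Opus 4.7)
The idea is to linearize near $\sigma$ and use the fact that, by Theorem \ref{th1} applied to both $H_1$ and $H_2$, the set $\Lambda$ cannot meet $\mathcal{F}^{ss}_X(\sigma)$ away from $\sigma$. Since both $H_1$ and $H_2$ are closures of transverse homoclinic orbits, each has dense periodic orbits; being subsets of $M(X)$, both are sectional-hyperbolic. As $\sigma\in H_1\cap H_2$, Theorem \ref{th1} applies to each $H_i$ and yields that $\sigma$ is Lorenz-like with eigenvalues $\lambda^{ss}<\lambda^s<0<-\lambda^s<\lambda^u$, together with $H_i\cap\mathcal{F}^{ss}_X(\sigma)=\{\sigma\}$ for $i=1,2$; in particular $\Lambda\cap\mathcal{F}^{ss}_X(\sigma)=\{\sigma\}$. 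I would then fix $\delta>0$ small enough that Hartman-Grobman's Theorem provides a topological conjugacy between $X$ on $B_\delta(\sigma)$ and its linear part, in coordinates $(a,b,c)$ for which $W^s_{\mathrm{loc}}(\sigma)=\{c=0\}$, $W^u_{\mathrm{loc}}(\sigma)=\{a=b=0\}$, and $\mathcal{F}^{ss}_X(\sigma)\cap B_\delta(\sigma)=\{b=c=0\}$.

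Arguing by contradiction, I would suppose, after passing to a subsequence, that there exists $x_n\in\Lambda\cap B_\delta(\sigma)$ converging to $\sigma$ with every $x_n\notin W^s(\sigma)\cup W^u(\sigma)$. Since $x_n\notin W^u(\sigma)$, the backward orbit of $x_n$ exits $B_\delta(\sigma)$ in finite time; let $y_n=X_{-t_n}(x_n)\in\partial B_\delta(\sigma)$ denote the first backward exit point. As $\Lambda$ is closed and invariant, $y_n\in\Lambda$. Writing $x_n=(a_n,b_n,c_n)$, the backward trajectory is $(e^{-\lambda^{ss}t}a_n,e^{-\lambda^s t}b_n,e^{-\lambda^u t}c_n)$; the relation $\lambda^{ss}<\lambda^s<0<\lambda^u$ forces the $a$-coordinate to grow strictly faster than the $b$-coordinate while the $c$-coordinate shrinks. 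When $a_n$ is not abnormally small compared to $b_n$, the exit is asymptotically aligned with the $a$-axis, so after a further subsequence $y_n\to y\in\mathcal{F}^{ss}_X(\sigma)\cap\partial B_\delta(\sigma)$. Closedness of $\Lambda$ then gives $y\in\Lambda\cap\mathcal{F}^{ss}_X(\sigma)$ with $y\neq\sigma$, contradicting Theorem \ref{th1}.

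The main obstacle is the remaining configuration in which $a_n$ is too small compared to $b_n$, so that the backward exit is dominated by the weak stable direction and the limit point $y$ would a priori land on $W^s(\sigma)\setminus\mathcal{F}^{ss}_X(\sigma)$. To handle this I plan to use the local strong stable foliation $W^{ss}_{\mathrm{loc}}$ available on a neighborhood of $\sigma$ by the sectional-hyperbolic structure of $M(X)$; its leaves vary continuously and satisfy $W^{ss}_{\mathrm{loc}}(x_n)\to\mathcal{F}^{ss}_X(\sigma)$ in $C^1$ as $x_n\to\sigma$. Combining this with the invariance of $\Lambda$, I would either flow $x_n$ forward until the strong stable coordinate dominates along the orbit (reducing to the previous case), or project along $W^{ss}_{\mathrm{loc}}(x_n)$ to produce a point of $\Lambda\setminus\{\sigma\}$ lying on $\mathcal{F}^{ss}_X(\sigma)$, yielding the same contradiction with Theorem \ref{th1}. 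Making this last projection or time-shift rigorous, so that the resulting points truly lie in the closed invariant set $\Lambda$, is the delicate step of the argument.
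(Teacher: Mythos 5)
There is a genuine gap, and it sits exactly where you flag it: the case in which the strong stable coordinate $a_n$ of $x_n$ is abnormally small compared to the weak stable coordinate $b_n$ is not a residual technicality but the whole content of the lemma, and it genuinely occurs. Note that your argument never uses the hypothesis that $\Lambda$ is the intersection of two \emph{different} homoclinic classes; it only uses that $\Lambda$ is a compact invariant subset of a sectional-hyperbolic homoclinic class containing $\sigma$, together with $\Lambda\cap\mathcal{F}^{ss}_X(\sigma)=\{\sigma\}$. But the statement is false under those weaker hypotheses: in the geometric Lorenz attractor (a single homoclinic class containing $\sigma$), points of the attractor enter the linearizing neighborhood through a cross-section transverse to the \emph{weak} stable axis with strong stable coordinate of order $e^{\lambda^{ss}t}\ll e^{\lambda^{s}t}$, so one gets $x_n\to\sigma$ with $x_n\notin W^s(\sigma)\cup W^u(\sigma)$ whose backward exit points converge to a point of $W^s(\sigma)\setminus\mathcal{F}^{ss}_X(\sigma)$ --- no contradiction with Theorem \ref{th1}. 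Your two proposed repairs do not close this: flowing \emph{forward} contracts the strong stable coordinate faster than the weak one (since $\lambda^{ss}<\lambda^s<0$), so it makes the ratio worse, never better; and projecting $x_n$ along $W^{ss}_{\mathrm{loc}}(x_n)$ onto $\mathcal{F}^{ss}_X(\sigma)$ produces a point with no reason to lie in $\Lambda$, because $\Lambda$ is not saturated by strong stable leaves.

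The paper's proof is built precisely on the hypothesis you omit. From the passage of $O_X(x_n)$ near the saddle it extracts $x_n^s\to y^s\in W^s(\sigma)\setminus\{\sigma\}$ and $x_n^u\to y^u\in W^u(\sigma)\setminus\{\sigma\}$, approximates $y^s$ simultaneously by homoclinic points $p_n$ of $O_1$ chosen with $p_n\notin H_2$ and homoclinic points $q_n$ of $O_2$, then by periodic orbits $O_1^{mn}\subset H_1\setminus H_2$ and $O_2^{mn}\subset H_2$ whose invariant manifolds have size uniformly bounded below; since both families accumulate $y^s$ in the same sector of $W^{cu}(\sigma)$, one obtains a transverse intersection $W^u(O_1^{m_1n_1})\cap W^s(O_2^{m_2n_2})\neq\emptyset$, which forces $O_1^{m_1n_1}\subset\Lambda\subset H_2$ --- the desired contradiction. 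Any correct argument must exploit $H_1\neq H_2$ in some such way; as written, yours cannot.
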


\begin{proof}
We suppose by contradiction that there is a sequence $\{x_n:n\in\nat\}\subset\Lambda\cap B_{\delta}(\sigma)$ such that
$x_n\to\sigma$ and $x_n\notin W^s(\sigma)\cup 
W^u(\sigma)$ for all $n$. 

So, we obtain two sequences $x^s_n$ and $x_n^u$, in the orbit of $x_n$ such that $x_n^s\to y^s$ 
and $x_n^u\to y^u$ for some $y^s\in W^s(\sigma)\setminus\{\sigma\}$ and $y^u\in W^u(\sigma)\setminus
\{\sigma\}$ close to $\sigma$. Let $O_1,O_2$ be two orbits such that $H(O_1)=H_1$ and $H(O_2)=H_2$. Then
there exist sequences $\{p_n:n\in\nat\}\subset (W^u(O_1)\cap W^s(O_1))$ and $\{q_n:n\in\nat\}\subset 
(W^u(O_2)\cap W^s(O_2))$ satisfying $p_n\to x_n^s$ and 
$q_n\to x_n^s$. We can assume $p_n\notin H_2$ for all $n$.
This means that $p_n\to x^s$ and $q_n\to x^s$ too. The behavior of the orbits of $x_n$, $p_n$ and $q_n$ nearby 
$\sigma$, are as described in Figure \ref{lemmaint}.

Since homoclinic classes have density of periodic points \cite{haka}, for each $n$ we have that $p_n$ and $q_n$ are 
approximated respectively by a sequence of periodic orbits $\{O_1^{mn}:m\in\nat\}$ and $\{O_2^{mn}:m\in\nat\}$. Define 
the map $\pi:B_{\delta}(\sigma)\to W^{cu}(\sigma)$ such as in {\em Subsection \ref{singpart}}.
Observe that $\{\pi(W^u(O_1^{mn})):m\in\nat\}$ and $\{\pi(W^u(O_2^{mn})):m\in\nat\}$ accumulate $y^s$ in the same sector
$s_{ij}$ of $W^{cu}(\sigma)$. 
Follows from {\em Lemma} 3.1 in \cite{carmo} that these sequences can be chosen in a way that, for $i=1,2$ and for all
$n,m$, $W^s(O_i^{nm})$ is uniformly bounded away from zero. This implies that for $m_1,m_2,n_1,n_2$ large,
$W^u(O_1^{m_1n_1})\cap W^s(O_2^{m_2n_2})\neq\emptyset$.
Consider $x\in W^u(O_1^{n_1m_1})\cap W^s(O_2^{m_2n_2})$. As $O_1^{m_1n_1}\subset (H_1\setminus H_2)$ and $O_2^{m_2n_2}
\subset H_2$, then there is $x^*\in O_X(x)$ such that $x^*\in\Lambda$. But $\Lambda$ is an invariant closed set, then
$O_1^{m_1n_1}\subset Cl(O_X(x^*))=Cl(O_X(x^*))\subset \Lambda$. However $O_1^{m_1n_1}\nsubseteq H_2$ and
$\Lambda\subset H_2$, which is a contradiction.

We conclude $x_n\in W^s(\sigma)\cup W^u(\sigma)$ for all $n\in\nat$.

\end{proof}

\begin{figure}
\begin{center}
\input{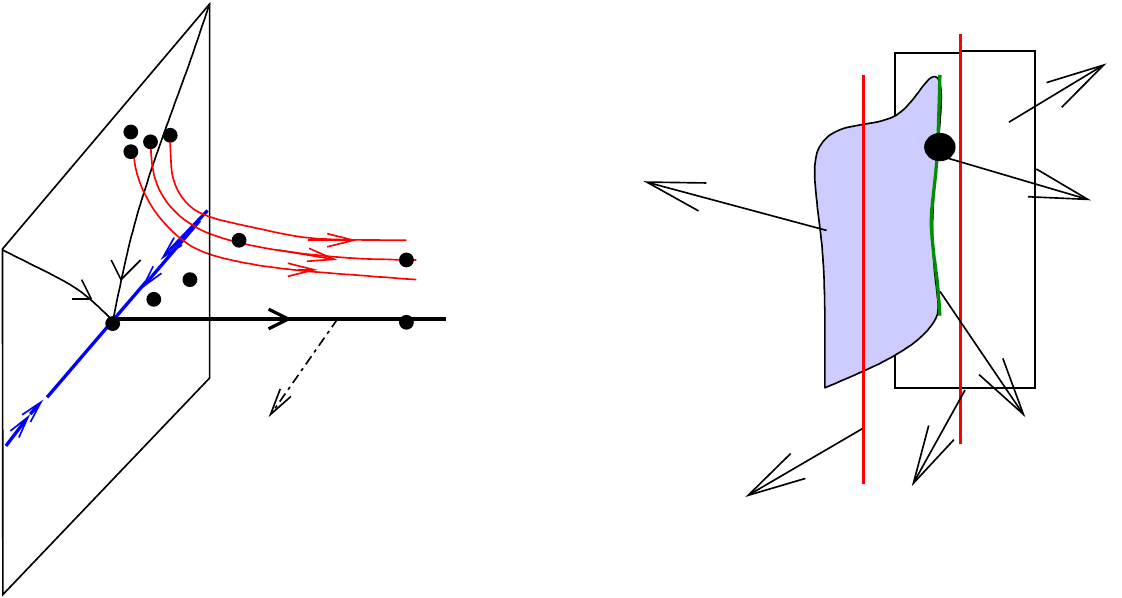_t}
\caption{\label{lemmaint} {\em Lemma} \ref{lem7}}
\end{center}
\end{figure}

\subsection{Proof theorem \ref{thH'}}

Theorem \ref{thH'} gives a description about the set $\Lambda$.

\begin{proof}
The idea of the proof is the same given in {\em Lemma 3.3} by \cite{bamo}. Follows to
{\em Lemma \ref{lem7}} that there is $\delta>0$ such that $\Lambda\cap B_{\delta}(\sigma)\subset 
W^s(\sigma)\cup W^u(\sigma)$, and the balls $B_{\delta}(\sigma)$
are pairwise disjoint for every $\sigma\in\Lambda\cap Sing(X)=S$. Define

$$H=\bigcap_{(t,\sigma)\in \re\times S}X_t(\Lambda\setminus B_{\delta}(\sigma)).$$

By construction, $H$ is a non-singular, \index{Non-singular set} compact invariant sectional-hyperbolic set.
So, applying {\em Lemma \ref{lem1}} we have that $H$ is hyperbolic. Now define $R=\Lambda\setminus(S\cup H)$.
For $x\in R$ there is $(t,\sigma)\in\re\times S$ with $X_t(x)\in B_{\delta}(\sigma)$, and by {\em Lemma
\ref{lem7}} $X_t(x)\in W^s(\sigma)\cup W^u(\sigma)$. 

If $x\in W^u(\sigma)$ we obtain $\alpha(x)\subset H\cup S$. Assume $X_s(x)\notin \bigcup_{\rho\in S}
B_{\delta}(\rho)$ for all $s\geq 0$, then $\omega(x)\subset H$. Now, if there is $(s,\rho)\in\re\times S$
such that $X_s(x)\in B_{\delta}(\rho)$ then $x\in W^s(\rho)$, So $\omega(x)\in H\cup S$. 

With a similar argument we have $\alpha(x)\subset H\cup S$ and $\omega(x)\subset H\cup S$ for
$x\in W^s(\sigma)$. So, we conclude the result.

\end{proof}

\section{Some conjectures}

Because of the study developed in this work, different questions have appeared.  
All known examples of Venice mask are characterized because the maximal invariant set is the finite union of homoclinic
classes and the intersection between two different homoclinic classes $H_1$ and $H_2$ is contained in $Cl(W^u(Sing(X)))$. 
Moreover, every regular point $q\in W^u(Sing(X))\cap H_1\cap H_2$ is non-recurrent.

Consider a Venice mask $X$ supported on a compact 3-manifold $M$.
Let $H_1$ and $H_2$ be two different homoclinic classes in $M(X)$ and let $\Lambda$ be the intersection between
$H_1$ and $H_2$. Assume the decomposition of $\Lambda$ given in {\em Theorem \ref{thH'}}, it is 
$\Lambda=S\cup H\cup R$. \\

We announce the following conjecture.

\begin{conj}
\label{conj1}
Every regular point $q\in R$ is non-recurrent. 
\end{conj}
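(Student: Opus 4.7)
The plan is to deduce Conjecture \ref{conj1} as an immediate corollary of Theorem \ref{thH'}, exploiting the disjointness of the decomposition $\Lambda = S \cup H \cup R$ together with the omega-limit control provided there.

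Suppose, toward a contradiction, that some $q \in R$ is recurrent, i.e., $q \in \omega_X(q)$. By Theorem \ref{thH'}, $\omega_X(q) \subset H \cup S$, so $q \in H \cup S$. Since $q \in R$ is regular, $q \notin S$, forcing $q \in H$. But in the proof of Theorem \ref{thH'} the set $R$ is defined as $R = \Lambda \setminus (S \cup H)$, whence $R \cap H = \emptyset$ and $q \notin H$, a contradiction. If one prefers to take recurrence in the negative sense, the same argument runs with $\alpha_X(q) \subset H \cup S$ in place of the omega inclusion.

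There is essentially no obstacle in this derivation beyond extracting the literal disjointness from the construction in the proof of Theorem \ref{thH'}. What I expect is ultimately the deeper content behind stating this as a conjecture is the structural consequence obtained by combining the non-recurrence with Theorem \ref{thH}: once $q \in R \cap W^u(\sigma)$ is known to be non-recurrent, Theorem \ref{thH} asserts that $\omega_X(q)$ is either a singularity or a hyperbolic periodic orbit, so one can read $R$ as a union of heteroclinic orbits joining singularities in $S$ to closed orbits. The genuinely hard step in upgrading to such a structure theorem is showing that any periodic orbit $O = \omega_X(q)$ arising this way must lie in the hyperbolic piece $H$, which requires further control on how $H_1$ and $H_2$ meet along $Cl(W^u(Sing(X)))$; but this goes beyond what the conjecture as stated asks for.
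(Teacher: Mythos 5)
First, a structural point: the statement you are proving is labeled a \emph{conjecture} in the paper. The author explicitly announces it as open and supplies no proof, so there is no argument of the paper's to compare yours against. That alone should make you suspicious of a two-line derivation from Theorem \ref{thH'}: if the conjecture were an immediate corollary of a theorem proved in the same paper, it would not have been stated as a conjecture.

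The actual defect is that your argument is circular. As pure logic, the implication ``$\omega_X(q)\subset H\cup S$ and $R\cap(H\cup S)=\emptyset$ imply $q\notin\omega_X(q)$'' is of course valid; the problem is that for a recurrent point $q\in R$ the inclusion $\omega_X(q)\subset H\cup S$ is precisely the assertion in question, and the paper's proof of Theorem \ref{thH'} does not establish it in the case such a point would occupy. Concretely, that proof disposes of the forward orbit of $x\in R\cap W^u(\sigma)$ by the alternative: either $X_s(x)\notin\bigcup_{\rho\in S}B_{\delta}(\rho)$ for all $s\geq 0$, whence $\omega(x)\subset H$, or $X_s(x)\in B_{\delta}(\rho)$ for some $s$, ``then $x\in W^s(\rho)$.'' The second clause silently discards the possibility, allowed by Lemma \ref{lem7}, that the return point lies in $W^u(\rho)\setminus W^s(\rho)$. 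A recurrent $q\in R\cap W^u(\sigma)$ falls exactly there: recurrence gives $\sigma\in Cl(O_X(q))\subset\omega_X(q)$, so the forward orbit re-enters $B_{\delta}(\sigma)$ at arbitrarily large times, and each such return must lie in $W^u(\sigma)\setminus W^s(\sigma)$ (a return in $W^s(\sigma)$ would force $\omega_X(q)=\{\sigma\}$ and hence $q=\sigma$, contradicting regularity); the symmetric remark applies to the $\alpha$-inclusion for a negatively recurrent point of $W^s(\sigma)$. So the content of Conjecture \ref{conj1} coincides with the unhandled case of Theorem \ref{thH'}, and deducing the former from the latter relocates the difficulty rather than resolving it. A genuine proof has to exclude recurrent regular orbits in $H_1\cap H_2$ near the singularities directly --- for instance by the cross-section and singular-partition machinery used for Theorem \ref{thH} --- which is exactly what the paper leaves open.
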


From {\em Lemma \ref{lem7}} we have that for $\delta>0$ small, $x\in B_{\delta}(\sigma)$ implies
$x\in W^s(\sigma)\cup W^u(\sigma)$ for some $\sigma\in S$. If
$x\in W^u(\sigma)$ then $\alpha(x)=\{\sigma\}$. Now we take $x\in W^s(\sigma)\setminus W^u(\sigma)$. 
Therefore we shall consider two cases, either $\alpha(x)=\{\rho\}$ for some $\rho\in S$ or $\alpha(x)\subset H$.
In the first case, we obtain the desired result. If we prove that the second case cannot occur, then the following
conjecture would be true.

\begin{conj}
\label{conj2}
$\Lambda\subset Cl(W^u(Sing(X)))$. 
\end{conj}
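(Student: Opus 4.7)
The plan is to use the decomposition $\Lambda = S \sqcup H \sqcup R$ from Theorem \ref{thH'} and handle each piece separately. The set $S$ lies in $Cl(W^u(Sing(X)))$ trivially, since $\sigma \in W^u(\sigma)$ for every singularity. For $x \in R$, the orbit of $x$ enters some $B_\delta(\sigma)$, so by Lemma \ref{lem7} and invariance we have $x \in W^s(\sigma) \cup W^u(\sigma)$. If $x \in W^u(\sigma)$, we are done; and if $x \in W^s(\sigma) \setminus W^u(\sigma)$ with $\alpha_X(x) = \{\rho\}$ for some $\rho \in S$, then $x \in W^u(\rho)$, again done.

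The remaining cases are $x \in H$ and $x \in R$ with $\alpha_X(x) \subset H$. I would reduce both to the single inclusion $H \subset Cl(W^u(Sing(X)))$. For the second case, once this inclusion is established, one can propagate forward: pick $y \in \alpha_X(x)$ with $X_{-t_n}(x) \to y$ and approximate $y$ by a sequence $z_n \in W^u(Sing(X))$; using continuous dependence of the flow on the compact interval $[0,t_n]$, one selects a diagonal $z_{k(n)}$ close enough to $X_{-t_n}(x)$ that $X_{t_n}(z_{k(n)}) \to x$, and since $W^u(Sing(X))$ is forward-invariant, this places $x$ in $Cl(W^u(Sing(X)))$.

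To establish the core inclusion $H \subset Cl(W^u(Sing(X)))$, my plan is to exploit the density of periodic points in $H_1$ and $H_2$, Property $(P)$ (Lemma \ref{l3}), and Theorem \ref{thH}. For a periodic orbit $O \subset H$, Property $(P)$ supplies a heteroclinic $W^u(O) \cap W^s(\sigma) \neq \emptyset$. The goal is to manufacture a \emph{reverse} heteroclinic $W^u(\sigma) \cap W^s(O) \neq \emptyset$: combining the density $Cl(W^s(Sing(X)) \cap M(X)) = M(X)$ (Remark after Lemma \ref{l3}) with inclination-type arguments as in Lemma \ref{lem41} and Proposition \ref{prop41}, one tries to produce a non-recurrent point $q \in W^u(\sigma)$ whose $\omega$-limit, by Theorem \ref{thH}, is a closed orbit matched to $O$. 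Then $q \in W^u(\sigma) \cap W^s(O)$ and the $\lambda$-lemma yields $O \subset Cl(W^u(\sigma))$.

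The hardest point is precisely this matching: Property $(P)$ gives the connection from $O$ to $\sigma$, but the conjecture requires the reverse connection from $\sigma$ to $O$, and the tools at hand guarantee only a dichotomy for $\omega_X(q)$ (singularity versus some periodic orbit), not that the periodic orbit is the given $O$. An additional difficulty is that $H$ need not have dense periodic points in general (it is hyperbolic but not necessarily locally maximal), so a priori there could be non-periodic recurrent orbits in $H$ that escape this argument. Addressing both issues is what presumably keeps Conjecture \ref{conj2} open.
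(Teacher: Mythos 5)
First, be aware that the statement you were asked to prove is labelled a \emph{conjecture} in the paper: the author does not prove it either. What the paper offers is exactly the first half of your reduction --- by Lemma \ref{lem7}, a point $x\in R$ lies in $W^s(\sigma)\cup W^u(\sigma)$ for some $\sigma\in S$; if $x\in W^u(\sigma)$, or if $x\in W^s(\sigma)\setminus W^u(\sigma)$ with $\alpha_X(x)=\{\rho\}$ for some $\rho\in S$, then $x$ lies in $W^u(Sing(X))$ --- and the author then states that the conjecture would follow if one could \emph{exclude} the remaining case $\alpha_X(x)\subset H$. Your strategy for the hard case is genuinely different: instead of ruling that case out, you try to absorb it by proving $H\subset Cl(W^u(Sing(X)))$ and then propagating forward along the orbit of $x$. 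Neither route closes, and your diagnosis of the obstruction (Property $(P)$ gives a connection from a periodic orbit $O$ to a singularity, while the conjecture needs the reverse connection, and the dichotomy of Theorem \ref{thH} does not let you match the resulting periodic orbit to a prescribed $O$) is accurate and is essentially why the statement is left open.

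That said, one step of your proposal is wrong as written, independently of the open part. The forward propagation ``$\alpha_X(x)\subset Cl(W^u(Sing(X)))$ implies $x\in Cl(W^u(Sing(X)))$'' does not follow from continuous dependence on compact time intervals. For each $n$ you need an approximant $z\in W^u(Sing(X))$ within distance $\delta_n$ of $X_{-t_n}(x)$, where $\delta_n$ is the modulus of continuity of $X_{t_n}$ at that point; since $t_n\to\infty$, $\delta_n$ typically decays exponentially in $t_n$, while your points $z_k$ only accumulate on $y\in\alpha_X(x)$, not on $X_{-t_n}(x)$ itself, which for fixed $n$ sits at a definite positive distance from $y$. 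In general a closed invariant set can contain $\alpha_X(x)$ without containing $x$ (a repelling closed orbit together with any point of its unstable set is a counterexample). The correct tool for this step is the inclination lemma applied to a transverse intersection $W^u(\sigma)\cap W^s(O)$ --- which is exactly the reverse heteroclinic connection you identify as the missing ingredient, so the propagation step cannot be decoupled from the matching problem in the way your outline suggests.
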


Let us state direct consequence of the hyperbolic Lemma \ref{lem1} that appears in \cite{lec}.

\begin{clly}
Every periodic orbit of a sectional-Anosov flow on a compact manifold is hyperbolic. 
In particular, all such flows have countably many closed orbits.
\end{clly}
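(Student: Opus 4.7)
The plan is to reduce the first assertion to the Hyperbolic Lemma (Lemma \ref{lem1}) and then extract countability by a standard compactness/isolation argument.

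First I would take a periodic orbit $O$ of the sectional-Anosov flow $X$. Because $O$ is the orbit of a point $p$ satisfying $X_T(p)=p$ for some $T>0$, it is a compact invariant subset of $M$; since $O\subset M(X)$ and $M(X)$ is sectional-hyperbolic, $O$ is a compact invariant subset of a sectional-hyperbolic set. Moreover, $O$ contains no singularities, so the Hyperbolic Lemma applies directly and yields that $O$ is a hyperbolic set of saddle-type. This immediately says that every periodic orbit is hyperbolic.

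For the second assertion, I would argue by isolation plus compactness. A hyperbolic periodic orbit $O$ has a transverse cross section on which the Poincar\'e return map has a fixed point whose linear part has no eigenvalue on the unit circle; by the implicit function theorem this fixed point is isolated, so $O$ admits a tubular neighborhood containing no other periodic orbit of comparable period. Fix $n\in\nat$ and consider the set $P_n$ of closed orbits of $X$ of period at most $n$. By continuity of the flow and compactness of $M$, $P_n$ is a closed (hence compact) subset of $M$. Since each element of $P_n$ is hyperbolic and therefore isolated inside $P_n$, the compact set $P_n$ must be finite. Adding in the singularities of $X$ (hyperbolic by definition of sectional-Anosov flow, hence isolated equilibria, hence finitely many by compactness of $M$), the set of all closed orbits is the countable union $\bigcup_{n\in\nat}P_n\cup Sing(X)$ of finite sets, which is countable.

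There is no serious obstacle: once Lemma \ref{lem1} is invoked, both conclusions are essentially bookkeeping. The only small subtlety is making the isolation of hyperbolic periodic orbits on cross sections precise so as to conclude that each $P_n$ is finite rather than merely discrete; this is handled by combining local isolation from hyperbolicity with compactness of $P_n\subset M$.
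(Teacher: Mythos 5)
Your argument is correct and follows exactly the route the paper intends: the corollary is stated there without proof as a direct consequence of the Hyperbolic Lemma (Lemma \ref{lem1}), which is precisely your first step, and your countability argument is the standard isolation-plus-compactness bookkeeping the paper leaves implicit. The only point worth tightening is the closedness of $P_n$: its closure could a priori pick up singularities, so one should note that periodic orbits of period at most $n$ cannot accumulate on a (hyperbolic) singularity because passage times through a small neighborhood of it blow up; with that remark your proof is complete.
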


This implies that the maximal invariant set of every Venice mask is union of countably many homoclinic classes. 
So, if {\em Conjecture \ref{conj1}} and  {\em Conjecture \ref{conj2}} are true, then would be possible to realize the
following statement.

\begin{conj}
\label{conj3}
The maximal invariant set of every Venice mask is finite union of homoclinic classes. 
\end{conj}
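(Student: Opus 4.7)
The plan is to argue by contradiction, using compactness of $M(X)$ and the spectral decomposition of hyperbolic sets. By the corollary just preceding Conjecture \ref{conj1}, we already have $M(X)=\bigcup_{i\in\nat}H_i$ with each $H_i=H(O_i)$ for a hyperbolic periodic orbit $O_i$. Suppose the union is infinite and the $O_i$ are pairwise distinct. Pick $p_i\in O_i$ and, by compactness, pass to a subsequence with $p_{i_k}\to p\in M(X)$. Let $K\subset M(X)$ be the set of accumulation points of $\{O_{i_k}\}$; then $K$ is a nonempty compact $X$-invariant subset of $M(X)$, and I split into two cases according to whether $K$ meets $\si(X)$.

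\textbf{Case 1: $K\cap\si(X)=\emp$.} By the Hyperbolic Lemma \ref{lem1}, $K$ is a hyperbolic set of saddle type. Since hyperbolicity is stable under small enlargement of the invariant set, for $N$ large the compact invariant set $L_N=Cl\bigl(K\cup\bigcup_{k\geq N}O_{i_k}\bigr)$ is still hyperbolic. Smale's spectral decomposition expresses its non-wandering set as a finite union of basic sets; each periodic orbit $O_{i_k}$ lies in one of them, so infinitely many $O_{i_k}$ fall in a common basic set $B$. Inside $B$ all periodic orbits are homoclinically related and hence generate the same homoclinic class, contradicting the pairwise distinctness of the $H_{i_k}$.

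\textbf{Case 2: $K\cap\si(X)\neq\emp$.} Fix $\sigma\in K\cap\si(X)$, which is Lorenz-like by Theorem \ref{th1}, and extract $q_k\in O_{i_k}$ with $q_k\to\sigma$. Using Hartman--Grobman around $\sigma$ together with the singular cross sections $\Sigma^{\pm},\Lambda^{\pm}$ from Lemma 2.2 of \cite{mp2} (as exploited in the proof of Proposition \ref{prop31}), after a further subsequence the orbits $O_{i_k}$ leave a neighbourhood of $\sigma$ along a single branch of $W^u(\sigma)\setminus\{\sigma\}$ and accumulate on a regular point $y$ of that branch. Conjecture \ref{conj1} then gives that $y$ is non-recurrent, so Theorem \ref{thH} yields the dichotomy that $\omega_X(y)$ is either a singularity or a hyperbolic periodic orbit $O$. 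In the periodic case, $O\subset K$ and a fixed neighbourhood of $O$ is entered by all but finitely many $O_{i_k}$, so the induced accumulation set around $O$ is hyperbolic and reduces the problem to Case 1. In the singular case one applies the same argument to the new singularity $\omega_X(y)\in\si(X)$; since $\si(X)$ is finite the resulting chain terminates, and either we fall back into Case 1 or we obtain a heteroclinic cycle through singularities accumulated by infinitely many $O_{i_k}$, which by Conjecture \ref{conj2} is incompatible with the prescribed form of the pairwise intersections $H_{i_k}\cap H_{i_l}\subset Cl(W^u(\si(X)))$.

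The main obstacle is Case 2: one must rigorously justify, using the singular cross-section machinery of Subsection \ref{singpart} together with Conjecture \ref{conj2}, that accumulation on a Lorenz-like $\sigma$ genuinely forces infinitely many $O_{i_k}$ into a common hyperbolic piece (or into a forbidden heteroclinic configuration), rather than into mutually unrelated orbits wandering off along distinct branches of $W^u(\sigma)$. Case 1 is the easy part and is essentially classical Smale theory for hyperbolic basic sets.
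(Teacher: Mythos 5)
Your argument takes a genuinely different route from the paper's --- a compactness/accumulation argument by contradiction rather than a direct count --- but it has a real gap exactly where you flag it, in Case 2, and Case 2 is in fact the essential case: in a Venice mask $W^s(\si(X))\cap M(X)$ is dense in $M(X)$ (Lemmas \ref{lem2} and \ref{l3}), so the accumulation set $K$ of an infinite family of periodic orbits will in general meet $\si(X)$, and Case 1 cannot carry the proof. Within Case 2, the ``reduction to Case 1'' fails: that all but finitely many $O_{i_k}$ enter a fixed neighbourhood of $O=\omega_X(y)$ does not yield a non-singular invariant accumulation set, since by Property $(P)$ those orbits must leave that neighbourhood and pass again near some singularity, so the relevant invariant set still contains $\sigma$ and the hypothesis $K\cap\si(X)=\emp$ of Case 1 is never restored. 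The claim that a heteroclinic chain through singularities ``is incompatible with Conjecture \ref{conj2}'' is also unsubstantiated: Conjecture \ref{conj2} only locates pairwise intersections inside $Cl(W^u(\si(X)))$ and by itself forbids no such configuration. Two further points: to invoke Theorem \ref{thH} at $y$ you need $y$ non-recurrent, and Conjecture \ref{conj1} supplies this only for points in the intersection of two distinct classes, which you have not verified for $y$; and in Case 1 the set $L_N$ as you define it need not be locally maximal, so the spectral decomposition does not apply to it directly (take instead the maximal invariant set of a small isolating neighbourhood of $K$, which is non-singular, hence hyperbolic by Lemma \ref{lem1}, and locally maximal).

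The paper closes the argument by a direct bound instead. Granting Conjectures \ref{conj1} and \ref{conj2}, Theorem \ref{thH} gives $Cl(W^u(\sigma))=\{\sigma\}\cup W^u(\sigma)\cup C_{\sigma}$ with $C_{\sigma}$ a single closed orbit; every pairwise intersection of classes then lies in the finite union $\bigcup_{\sigma\in \si(X)}Cl(W^u(\sigma))$; each homoclinic class is anchored, via Property $(P)$ and the inclination lemma, to a one-dimensional branch of some $W^u(\sigma)$; and at most three classes can contain a given branch. With finitely many singularities and branches, finiteness follows. If you want to rescue your approach, the missing ingredient is precisely this anchoring-and-bounding step for periodic orbits accumulating a Lorenz-like singularity, which is what the singular cross-section machinery of Subsection \ref{singpart} is designed to provide.
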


\begin{proof}
Let $X$ be a Venice mask supported on a compact 3-manifold $M$. Then $X$ has finite many singularities, we say $n$. 
Let $H_1$, $H_2$ be two different homoclinic classes associated to $M(X)$. From Conjectures \ref{conj1} and \ref{conj2}
is possible to apply Theorem \ref{thH} to conclude that for each singularity
$\sigma$ of $X$, $Cl(W^u(\sigma))=\{\sigma\}\cup W^u(\sigma)\cup C_{\sigma}$, it is a disjoint union and 
$C_{\sigma}$ is a closed orbit. On the other hand, the branches of $W^u(\sigma)$ are uni-dimensional. 
Therefore Theorem \ref{conj2} implies $H_1\cap H_2$ has just only a finite number of possibilities to occur. Moreover, at
most three homoclinic classes can contain the branch of the unstable manifold of some singularity.  

This finishes the proof.

\end{proof}

\bibliographystyle{acm}

\bibliography{RCMBibTeX}

\begin{thebibliography}{10}

\bibitem{abs}
{\sc Afraimovich, V.~S., Bykov, V.~V., and Shilnikov, L.~P.}
\newblock On structurally unstable attracting limit sets of lorenz attractor
  type.
\newblock {\em Trudy Moskov. Mat. Obshch. 44}, 2 (1982), 150--212.

\bibitem{ap}
{\sc Ara{\'u}jo, V., and Pac{\'i}fico, M.~J.}
\newblock {\em Three-dimensional flows}, vol.~53.
\newblock Springer Science \& Business Media, 2010.

\bibitem{apu}
{\sc Arroyo, A., and Pujals, E.}
\newblock Dynamical properties of singular-hyperbolic at- tractors.
\newblock {\em Discrete Contin. Dyn. Syst. 19}, 1 (2007), 67--87.

\bibitem{bau}
{\sc Bautista, S.}
\newblock Sobre conjuntos singulares-hiperb\'olicoss.
\newblock {\em Tese de Doutorado, UFRJ\/} (2005).

\bibitem{lec}
{\sc Bautista, S., and Morales, C.~A.}
\newblock Lectures on sectional-anosov flows.
\newblock http://preprint.impa.br/Shadows/SERIE{\_}D/2011/86.html.

\bibitem{bamo2}
{\sc Bautista, S., and Morales, C.~A.}
\newblock Characterizing omega-limit sets which are closed orbits.
\newblock {\em J. Differential Equations 245}, 3 (2008), 637--652.

\bibitem{bm2}
{\sc Bautista, S., and Morales, C.~A.}
\newblock A sectional-anosov connecting lemma.
\newblock {\em Ergodic Theory Dynam. Systems 30}, 2 (2010), 339--359.

\bibitem{bamo}
{\sc Bautista, S., and Morales, C.~A.}
\newblock On the intersection of sectional-hyperbolic sets.
\newblock {\em J. of Modern Dynamics 10}, 1 (2016), 1--16.

\bibitem{bmp}
{\sc Bautista, S., Morales, C.~A., and Pacifico, M.~J.}
\newblock On the intersection of homoclinic classes on singular-hyperbolic
  sets.
\newblock {\em Discrete and continuous Dynamical Systems 19}, 4 (2007),
  761--775.

\bibitem{bdv}
{\sc Bonatti, C., Diaz, L., and Viana, M.}
\newblock {\em Dynamics beyond uniform hyperbolicity. A global geometric and
  probabilistic perspective,}.
\newblock Encyclopaedia of Mathematical Sciences, 102. Mathematical Physics,
  III. Springer-Verlag, Berlin, 2005.

\bibitem{bpv}
{\sc Bonatti, C., Pumari\~no, A., and Viana, M.}
\newblock Lorenz attractors with arbitrary expanding dimension.
\newblock {\em C. R. Acad. Sci. Paris S\'er. I Math. 325}, 8 (1997), 883--888.

\bibitem{carmo}
{\sc Carballo, C.~M., and Morales, C.~A.}
\newblock Omega-limit sets close to singular-hyperbolic attractors.
\newblock {\em Illinois Journal of Mathematics 48}, 2 (2004), 645--663.

\bibitem{gah}
{\sc G{\"a}hler, S.}
\newblock Lineare 2-normierte r{\"a}ume.
\newblock {\em Mathematische Nachrichten 28}, 1-2 (1964), 1--43.

\bibitem{gw}
{\sc Guckenheimer, J., and Williams, R.~F.}
\newblock Structural stability of lorenz attractors.
\newblock {\em Publications Math{\'e}matiques de l'IH{\'E}S 50}, 1 (1979),
  59--72.

\bibitem{hay}
{\sc Hayashi, S.}
\newblock Connecting invariant manifolds and the solution of the
  $c^1$-stability and $\omega$-stabililty conjectures for flows.
\newblock {\em Annals of Math. 145\/} (1997), 81--137.

\bibitem{haka}
{\sc Katok, A., and Hasselblatt, B.}
\newblock {\em Introduction to the Modern Theory of Dynamical Systems},
  vol.~54.
\newblock Cambridge University Press, 1997.

\bibitem{kat}
{\sc Kawaguchi, A., and Tandai, K.}
\newblock On areal spaces i.
\newblock {\em Tensor NS 1\/} (1950), 14--45.

\bibitem{ls}
{\sc L\'opez~Barragan, A.~M., and S\'anchez, H. M.~S.}
\newblock Sectional anosov flows: Existence of venice masks with two
  singularities.
\newblock {\em Bulletin of the Brazilian Mathematical Society, New Series 48},
  1 (2017), 1--18.

\bibitem{mem}
{\sc Metzger, R., and Morales, C.~A.}
\newblock Sectional-hyperbolic systems.
\newblock {\em Ergodic Theory and Dynamical Systems 28}, 05 (2008), 1587--1597.

\bibitem{mor}
{\sc Morales, C.~A.}
\newblock Strong stable manifolds for sectional-hyperbolic sets.
\newblock {\em Discrete and Continuous Dynamical Systems 17}, 3 (2007),
  553--560.

\bibitem{mo}
{\sc Morales, C.~A.}
\newblock Sectional-anosov flows.
\newblock {\em Monatshefte f{\"u}r Mathematik 159}, 3 (2010), 253--260.

\bibitem{mpa1}
{\sc Morales, C.~A., and Pac{\'i}fico, M.~J.}
\newblock Mixing attractors for 3-flows.
\newblock {\em Nonlinearity 14}, 2 (2001), 359--378.

\bibitem{mp1}
{\sc Morales, C.~A., and Pac{\'i}fico, M.~J.}
\newblock A dichotomy for three-dimensional vector fields.
\newblock {\em Ergodic Theory Dynam. Systems 23}, 5 (2003), 1575--1600.

\bibitem{mp2}
{\sc Morales, C.~A., and Pac{\'i}fico, M.~J.}
\newblock Sufficient conditions for robustness of attractors.
\newblock {\em Pacific journal of mathematics 216}, 2 (2004), 327--342.

\bibitem{mp}
{\sc Morales, C.~A., and Pac{\'i}fico, M.~J.}
\newblock A spectral decomposition for singular-hyperbolic sets.
\newblock {\em Pacific Journal of Mathematics 229}, 1 (2007), 223--232.

\bibitem{mpp2}
{\sc Morales, C.~A., Pac{\'i}fico, M.~J., and Pujals, E.~R.}
\newblock Singular hyperbolic systems.
\newblock {\em Proceedings of the American Mathematical Society 127}, 11
  (1999), 3393--3401.

\bibitem{mv}
{\sc Morales, C.~A., and Vilches, M.}
\newblock On 2-riemannian manifolds.
\newblock {\em SUT J. Math. 46}, 1 (2010), 119--153.

\bibitem{dmp}
{\sc Palis, J., and De~Melo, W.}
\newblock {\em Geometric theory of dynamical systems}.
\newblock Springer, 1982.

\bibitem{sma}
{\sc Smale, S.}
\newblock Differentiable dynamical systems.
\newblock {\em Bulletin of the American mathematical Society 73}, 6 (1967),
  747--817.

\end{thebibliography}

\medskip

\flushleft
H. M. S\'anchez\\
Instituto de Matem\'atica, Universidade Federal do Rio de Janeiro\\
Rio de Janeiro, Brazil\\
E-mail: hmsanchezs@unal.edu.co

\end{document}